\numberwithin{equation}{section}
\DeclarePairedDelimiter\ceil{\lceil}{\rceil}
\DeclarePairedDelimiter\floor{\lfloor}{\rfloor}
\newtheorem{The}{Theorem}[section]
\newtheorem{Lem}[The]{Lemma}
\newtheorem{Cor}[The]{Corollary}
\newtheorem{Rem}[The]{Remark} 
\newtheorem{Pro}[The]{Proposition}
\newcommand{\R}{{\mathbb R}}
\newcommand{\N}{{\mathbb N}}
\newcommand{\eps}{\varepsilon}
\newcommand{\p}{\partial}
\newcommand{\norm}[1]{\left\lVert #1 \right\rVert}
\newcommand{\abs}[1]{\left\lvert #1 \right\rvert}
\begin{document}

\title[Geometrical optics for the fractional Helmholtz equation]{Geometrical optics for the fractional Helmholtz equation and applications to inverse problems}

\author{Giovanni Covi}
\address{Department of Mathematics and Statistics, University of Helsinki, Finland}
\email{\texttt{giovanni.covi@helsinki.fi}} 

\author{Maarten de Hoop}
\address{Department of Computational and Applied Mathematics, Rice University, Houston, TX, USA}
\email{\texttt{mvd2@rice.edu}}

\author{Mikko Salo}
\address{Department of Mathematics and Statistics, University of Jyv\"askyl\"a, Finland}
\email{\texttt{mikko.j.salo@jyu.fi}}

\begin{abstract}
In this paper we construct a parametrix for the fractional Helmholtz equation $((-\Delta)^s - \tau^{2s} r(x)^{2s} + q(x))u=0$ making use of geometrical optics solutions. We show that the associated eikonal equation is the same as in the classical case, while in the first transport equation the effect of nonlocality is only  {visible} in the zero-th order term, which depends on $s$. Moreover, we show that the  {approximate} geometrical optics solutions present different behaviors in the regimes $s\in(0,\frac 12)$ and $s\in [\frac 12,1)$. While the latter case is quite similar to the classical one, which corresponds to $s=1$, in the former case we find that the potential is a strong perturbation, which changes the propagation of singularities. As an application, we study the inverse problem consisting in recovering the potential $q$ from Cauchy data when the refraction index $r$ is fixed and simple. Using our  {parametrix based on the} construction of  {approximate} geometrical optics solutions, we prove that H\"older stability holds for this problem. This is a substantial improvement over the state of the art for fractional wave equations, for which the usual Runge approximation argument can provide only logarithmic stability. Besides its mathematical novelty, this study is motivated by envisioned applications in nonlocal elasticity models emerging from the geophysical sciences.
\end{abstract}

\maketitle
\tableofcontents

\section{Introduction}

In this article we study the fractional Helmholtz operator  
\[
L = ((-\Delta)^s - \tau^{2s} r(x)^{2s} + q(x))
\]
and the fractional Helmholtz equation 
\[
Lu = 0 \text{ in $\Omega$}.
\]
As we will explain below, this is a scalar model for equations appearing in nonlocal elasticity. We assume that $\Omega \subset \R^n$ is a bounded domain with smooth boundary, $r \in C^{\infty}(\R^n)$ is a positive function (index of refraction), $q \in C^{\infty}(\R^n)$ is a potential, and $\tau > 0$ is a (large) frequency. The operator $(-\Delta)^s$ is the fractional Laplacian, defined via the Fourier multiplier $|\xi|^{2s}$ where $0 < s < 1$. We consider functions $u \in H^s(\R^n)$ that solve $Lu = 0$ in $\Omega$ in the sense of distributions.

In the classical case where $s=1$, the above equation is the standard Helmholtz equation. If the index of refraction $r(x)$ is sufficiently nice (e.g.\ the Riemannian metric $g_{jk}(x) = r(x)^{2} \delta_{jk}$ in $\overline{\Omega}$ is simple, meaning that it is simply connected with strictly convex boundary and no conjugate points \cite{PSU23}), then the equation $(-\Delta - \tau^2 r^2 + q) u = 0$ admits geometrical optics solutions that concentrate along geodesics of the metric $g$ when $\tau \to \infty$. These solutions have the form 
\[
u = e^{i\tau \varphi} (a_0 + \tau^{-1} a_1 + \tau^{-2} a_2 + \ldots)
\]
where the phase function $\varphi \in C^{\infty}(\overline{\Omega})$ is a real valued solution of the eikonal equation 
\[
|\nabla \varphi| = r,
\]
and the principal amplitude $a_0 \in C^{\infty}(\overline{\Omega})$ solves the transport equation 
\[
2 \nabla \varphi \cdot \nabla a_0 + (\Delta \varphi) a_0 = 0.
\]
Moreover, if we write $h = 1/\tau$ and let $P = h^2 r^{-2} L = -h^2 r^{-2} \Delta - 1 + h^2 r^{-2} q$, then semiclassical singularities of solutions of $Pu = 0$ propagate along the bicharacteristics of the semiclassical operator $P$ \cite[Section 12.3]{Zworski_Semiclassical_Analysis}. The spatial projections of these bicharacteristics are the geodesics of the metric $g$.

We wish to obtain analogous results in the nonlocal case where $0 < s < 1$. Our first result shows that for $1/2 \leq s < 1$ the fractional Helmholtz equation, even though it is nonlocal, admits geometrical optics solutions that are very similar to the classical case $s=1$.

\begin{The}[Case $1/2 \leq s < 1$] \label{thm_main1}
Suppose that $\varphi \in C^{\infty}(\R^n)$ solves the eikonal equation 
\[
|\nabla \varphi| = r \text{ in $\Omega$},
\]
and $a_0 \in C^{\infty}_c(\R^n)$ solves the transport equation 
\[
\left. \begin{array}{rl} 
2 \nabla \varphi \cdot \nabla a_0 + b_s a_0 = 0 \text{ in $\Omega$}, & \text{if $1/2 < s < 1$,} \\[5pt]
2 \nabla \varphi \cdot \nabla a_0 + b_s a_0 = q \text{ in $\Omega$}, & \text{if $s=1/2$,}
\end{array} \right.
\]
where $b_s := \Delta \varphi + (2s-2)\frac{\varphi'' \nabla \varphi \cdot \nabla \varphi}{\abs{\nabla \varphi}^2}$.

Then for any $N\in\mathbb N$ there is a solution of $Lu = 0$ in $\Omega$ having the form 
\[
u =  e^{i\tau \varphi} (a_0 + \tau^{-\alpha_1} a_1 + \ldots + \tau^{-\alpha_N} a_N) + R_N
\]
where $a_1, \ldots, a_N \in C^{\infty}_c(\R^n)$ are independent of $\tau$, and $R_N \in H^s(\R^n)$ satisfies  
\[
\norm{R_N}_{H^s} = O(\tau^{-C_sN})
\]
for a fixed constant $C_s>0$ depending only on $s$. Above, $(\alpha_j)_{j=1}^{\infty} \subset \R_+$ is a strictly increasing sequence depending on $s$ such that  $\alpha_j \to \infty$ as $j \to \infty$.
\end{The}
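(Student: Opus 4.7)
The plan is to insert the ansatz $u_{\mathrm{app}} := e^{i\tau\varphi}(a_0 + \tau^{-\alpha_1}a_1 + \ldots + \tau^{-\alpha_N}a_N)$ into $L$, match orders of $\tau$ in an asymptotic expansion of $(-\Delta)^s(e^{i\tau\varphi}a)$ to fix the amplitudes and the exponents $\alpha_j$, and finally correct the resulting approximate solution by adding a remainder $R_N$ with the required $H^s$ bound.

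The key technical step is an asymptotic expansion of the form
\[
e^{-i\tau\varphi(x)}(-\Delta)^s(e^{i\tau\varphi}a)(x) = \sum_{j=0}^{M}\tau^{2s-\beta_j}P_j[\varphi]a(x) + E_M(x,\tau),
\]
valid whenever $a \in C^\infty_c(\R^n)$ and $|\nabla\varphi| > 0$ on $\mathrm{supp}(a)$. To derive it I rewrite $(-\Delta)^s(e^{i\tau\varphi}a)$ as the oscillatory double integral produced by the Fourier multiplier representation of $(-\Delta)^s$, rescale $\xi = \tau\eta$, and apply stationary phase to the phase $\Phi(y,\eta) = (x-y)\cdot\eta + \varphi(y) - \varphi(x)$. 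The unique critical point $(y,\eta) = (x,\nabla\varphi(x))$ is non-degenerate (Hessian with determinant of modulus $1$ and zero signature), and $|\eta|^{2s}a(y)$ is smooth near it because $\nabla\varphi \neq 0$ on $\mathrm{supp}(a)$. The singularity of $|\eta|^{2s}$ at $\eta = 0$ is isolated by a smooth cutoff and produces a nonlocal remainder whose $H^s$ decay rate in $\tau$ degrades as $s$ decreases; the restriction $s \geq 1/2$ is imposed precisely to keep this piece subleading. The leading term is $\tau^{2s}|\nabla\varphi|^{2s}a$, and the second term, at order $\tau^{2s-1}$, is computed via the standard formula $T_1(A) = \tfrac{1}{2}\sum_{j,k}(H^{-1})_{jk}\partial_j\partial_k A$ applied to $A(y,\eta) = |\eta|^{2s}a(y)$.

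Substituting $u_{\mathrm{app}}$ into $Lu = 0$ and collecting powers of $\tau$ then yields a cascade of equations. The $\tau^{2s}$ equation reads $(|\nabla\varphi|^{2s} - r^{2s})a_0 = 0$, i.e.\ the eikonal. The $\tau^{2s-1}$ equation, after dividing through by the non-vanishing factor $s|\nabla\varphi|^{2s-2}$ coming from differentiating $|\eta|^{2s}$ once in $\eta$, reduces to $2\nabla\varphi\cdot\nabla a_0 + b_s a_0 = 0$; the extra term $(2s-2)\varphi''\nabla\varphi\cdot\nabla\varphi/|\nabla\varphi|^2$ inside $b_s$ originates from the $\eta$-$\eta$ block of the Hessian pairing against the second $\eta$-derivative of $|\eta|^{2s}$. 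When $s = 1/2$ the potential $q$ contributes at the same order $\tau^{2s-1} = \tau^0$, producing the inhomogeneous right-hand side, whereas for $s > 1/2$ the $q$ term is of strictly lower order and enters only in the equations for later $a_j$'s. The subsequent equations are inhomogeneous first-order linear transport equations along the integral curves of $\nabla\varphi$, forced by the previously determined amplitudes together with the lower-order differential operators $P_j[\varphi]$; since these curves project onto geodesics of the simple metric $g = r^2\delta$, one can prescribe initial data transverse to the flow and solve globally with $a_j \in C^\infty_c(\R^n)$. The sequence $(\alpha_j)$ is read off from the orders $\beta_j$ appearing in the expansion.

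The approximate solution then satisfies $Lu_{\mathrm{app}} = f_N$ with $\|f_N\|_{L^2}$ controlled by a negative power of $\tau$ coming from truncating the expansion at order $N$, and I would invoke a well-posedness result for the fractional Helmholtz equation with vanishing exterior data to produce $R_N \in H^s(\R^n)$ solving $LR_N = -f_N$ with $\|R_N\|_{H^s} = O(\tau^{-C_s N})$, where $C_s$ absorbs any loss in the solution operator. The main obstacle is the expansion step: both the stationary-phase part and the estimate of the nonlocal remainder coming from the symbol singularity at $\eta = 0$ must be carried out uniformly in $\tau$ and in the $H^s$ norm, and the precise decrements $\alpha_j - \alpha_{j-1}$ must be identified. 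Unlike the classical case $s = 1$, the exponents need not be integers, since the expansion mixes contributions from stationary-phase corrections, which decrement by integers, with contributions from the cutoff near $\eta = 0$, whose decrements depend on $s$.
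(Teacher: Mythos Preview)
Your overall architecture---stationary phase expansion of $e^{-i\tau\varphi}(-\Delta)^s(e^{i\tau\varphi}a)$, matching powers of $\tau$ to generate eikonal and transport equations, then correcting via a solvability result---is exactly the paper's strategy (Proposition~\ref{prop:approx-sol-rq} combined with Proposition~\ref{first-solvability-proposition}). The identification of the eikonal and first transport equations is also correct. However, two of your explanations are wrong in a way that would derail a detailed proof.

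First, your reason for the restriction $s\geq 1/2$ is incorrect. The contribution from the cutoff near $\eta=0$ is not merely ``subleading'' for $s\geq 1/2$: it is $O(\tau^{-\infty})$ for \emph{all} $s\in(0,1)$, because $\nabla\varphi$ is nonvanishing on the support of the amplitude and nonstationary phase applies (see Step~1 of the proof of Proposition~\ref{prop:approx-sol-rq}). The actual dichotomy comes from the potential: the first transport operator sits at order $\tau^{2s-1}$ while $q$ enters at order $\tau^{0}$. For $s\geq 1/2$ one has $2s-1\geq 0$, so $q$ is at the same or lower order and feeds harmlessly into the recursion (Step~4). For $s<1/2$ the potential dominates the transport term, the recursion breaks (Remark~\ref{rem:why-small-s-hard}), and one is forced to introduce the extra phase functions $\varphi_j$ of Theorem~\ref{thm_main2}.

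Second, and relatedly, the non-integer exponents $\alpha_j$ do not arise from the $\eta=0$ cutoff. They come from the mixing of the integer decrements produced by stationary phase with the offset $2s-1$ at which the potential re-enters: the paper shows one needs $\mathcal A=\N+(2s-1)\N$ precisely so that the absorption conditions $2s-1+\mathcal A\subseteq\mathcal A$ and $\mu-1+\mathcal A\subseteq\mathcal A$ hold.

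Finally, the correction step cannot be handled by a generic ``well-posedness result with vanishing exterior data.'' One needs a solvability statement whose loss is at worst a fixed power of $\tau$; the paper obtains this (Proposition~\ref{first-solvability-proposition}) from the semiclassical a priori estimate $\|u\|_{H^{2s-1}_{scl}}\lesssim h^{-1}\|h^{2s}Lu\|_{L^2}$, proved by a positive commutator argument that requires the nontrapping hypothesis on $r$. Without this ingredient the bound $\|R_N\|_{H^s}=O(\tau^{-C_sN})$ is not available.
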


We make the following remarks:

\begin{itemize}
\item 
The eikonal equation for $1/2 \leq s < 1$ is exactly the same as for $s=1$, which indicates that singularities propagate along geodesics.
\item 
The first transport equation for $1/2 < s < 1$ is almost the same as for $s=1$. The effect of nonlocality is only visible in the zeroth order term $b_s$ that depends on $s$. Thus $a_0$ will contain an exponential $s$-dependent attenuation factor.
\item 
For $s=1/2$ the first transport equation also contains the potential $q$, which can be considered as part of the subprincipal symbol (the principal part is of order $1$ and $q$ is of order $0$). This is analogous to the case of first order perturbations of the Laplacian.
\item 
The functions $\varphi$ and $a_j$ need to be defined on $\R^n$ to have $u \in H^s(\R^n)$. Their values outside $\overline{\Omega}$ will affect $R_N$, but not the asymptotics in $\tau$.
\end{itemize}

We also note that for $1/2 \leq s < 1$, one could formally apply the semiclassical propagation of singularities theorem in \cite[Section 12.3]{Zworski_Semiclassical_Analysis} to conclude that singularities propagate along geodesics. This is not quite rigorous since the symbol of $h^{2s} (-\Delta)^{s}$ is not smooth at $\xi=0$. Our proof of Theorem \ref{thm_main1} is based on stationary phase expansions that deal with this nonsmoothness.

Next we state a result for $0 < s < 1/2$. In this case the potential $q$ is a strong perturbation (the principal part has order $2s < 1$ and $q$ is of order $0$), which changes the propagation of singularities. Similar phenomena appear in long range scattering theory \cite[Volume 4]{HO:analysis-of-pdos}. In the geometrical optics construction this will be visible in the presence of lower order phase functions that give rise to exponential factors  oscillating at different frequencies.

\begin{The}[Case $0 < s < 1/2$] \label{thm_main2}
Suppose that $\varphi_0 \in C^{\infty}(\R^n)$ solves the eikonal equation 
\[
|\nabla \varphi_0| = r \text{ in $\Omega$},
\]
and $a_0 \in C^{\infty}_c(\R^n)$ solves the transport equation 
\[
2 \nabla \varphi \cdot \nabla a_0 + b_s a_0 = 0 \text{ in $\Omega$}
\]
where $b_s := \Delta \varphi + (2s-2)\frac{\varphi'' \nabla \varphi \cdot \nabla \varphi}{\abs{\nabla \varphi}^2}$.

Then for any $N\in\mathbb N$ there exist $M\in\mathbb N$ and a solution of $Lu = 0$ in $\Omega$ having the form 
\[
u =  e^{i\tau (\varphi_0 + \tau^{-2s} \varphi_1 + \ldots + \tau^{-2 M s} \varphi_M)} (a_0 + \tau^{-\alpha_1} a_1 + \ldots + \tau^{-\alpha_M} a_M) + R_N
\]
where $\varphi_j \in C^{\infty}(\R^n)$ and $a_j \in C^{\infty}_c(\R^n)$ are independent of $\tau$, and $R_N \in H^s(\R^n)$ satisfies  
\[
\norm{R_N}_{H^s} = O(\tau^{-C_sN})
\]
for a fixed constant $C_s>0$ depending only on $s$.
\end{The}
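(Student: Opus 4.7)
The plan is to adapt the geometrical optics construction of Theorem \ref{thm_main1} to the regime $0<s<1/2$. The new feature is that the potential $q$ has \emph{higher} relative order than the subprincipal term produced by $(-\Delta)^s$ acting on an oscillating exponential (since the principal part of $L$ is of order $2s<1$), so $q$ cannot be absorbed into the amplitude equation and must instead be compensated by lower order phase corrections $\varphi_1,\varphi_2,\ldots$ at frequency scales $\tau^{1-2s},\tau^{1-4s},\ldots$.

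I would start with the ansatz $u_N=e^{i\tau\Phi}A$, where
\[
\Phi=\varphi_0+\tau^{-2s}\varphi_1+\ldots+\tau^{-2Ms}\varphi_M,\qquad A=a_0+\tau^{-\alpha_1}a_1+\ldots+\tau^{-\alpha_M}a_M,
\]
and extend the stationary phase/symbol expansion underlying Theorem \ref{thm_main1} to this $\tau$-dependent phase, obtaining an asymptotic series of the form
\[
e^{-i\tau\Phi}(-\Delta)^s(e^{i\tau\Phi}A)(x)\;\sim\;\sum_{k,m\geq 0}\tau^{2s-2ks-m}\,T_{k,m}\bigl(\varphi_0,\ldots,\varphi_k;A\bigr)(x).
\]
The $T_{k,0}$ come from Taylor expanding the symbol $|\xi|^{2s}$ at $\xi=\tau\nabla\varphi_0$ in the direction of the lower order gradients $\nabla\varphi_1,\nabla\varphi_2,\ldots$, using the binomial series for $|\cdot|^{2s}$; the $T_{k,m}$ with $m\geq 1$ are the classical $\tau^{-m}$ corrections obtained by differentiating $A$ and the residual phase. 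The non-smoothness of $|\xi|^{2s}$ at $\xi=0$ is harmless: on $\spt A$ one has $\nabla\varphi_0\neq 0$, so a microlocal cutoff near $\xi=\tau\nabla\varphi_0$ yields the displayed series, while its complement contributes $O(\tau^{-\infty})$ by non-stationary phase.

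Next I would match orders and construct $\varphi_j,a_j$ inductively. At order $\tau^{2s}$ the eikonal $|\nabla\varphi_0|=r$ is recovered. At order $\tau^0$, the $T_{1,0}$ contribution from expanding $|\tau\nabla\Phi|^{2s}$ combines with $qa_0$ to give the linear equation
\[
2s\,r^{2s-2}\,\nabla\varphi_0\cdot\nabla\varphi_1+q=0,
\]
globally solvable along the geodesics of $g=r^2\delta$ by simplicity. At order $\tau^{2s-1}$ one recovers the classical transport equation $2\nabla\varphi_0\cdot\nabla a_0+b_sa_0=0$ for $a_0$ as in Theorem \ref{thm_main1}. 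Continuing, each new order isolates either a new phase (via $2s\,r^{2s-2}\,\nabla\varphi_0\cdot\nabla\varphi_j=F_j$) or a new amplitude (via $2\nabla\varphi_0\cdot\nabla a_j+b_sa_j=G_j$), with right-hand sides polynomial in the already constructed data. Both are first order linear along the characteristics of $\nabla\varphi_0$, hence globally solvable; the sequence $(\alpha_j)$ and the threshold $M=M(N)$ are read off the induced ordering of exponents. Finally, $Lu_N=e^{i\tau\Phi}r_N$ with $\|r_N\|_{L^2}=O(\tau^{-C_sN})$, and solving $LR_N=-Lu_N$ via the solvability theory for $L$ established elsewhere in the paper produces the remainder, losing at most a fixed power of $\tau$ that can be absorbed into $C_s$ after increasing $N$.

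The main obstacle is the bookkeeping of exponents: the set $\{2s-2ks-m:k,m\geq 0\}$ is not regularly spaced and interleaves in an $s$-dependent way with the amplitude scales $\{-\alpha_j\}$. One must order this set, verify at each level that the equation for the next unknown has a right-hand side depending only on strictly earlier data, and check that the resulting $(\alpha_j)$ is strictly increasing with $\alpha_j\to\infty$. A secondary issue is making the stationary phase expansion quantitative and uniform in $\tau$ although $\Phi$ itself depends on $\tau$; this requires treating $\tau^{-2s}\varphi_1+\ldots$ as a small perturbation of $\varphi_0$ and keeping explicit remainder bounds at each stage.
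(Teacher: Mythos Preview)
Your proposal is correct and follows essentially the same route as the paper: the paper likewise writes $u=e^{i\tau\varphi_0}\tilde a$ with $\tilde a=e^{i\tau\sum_{j\geq 1}\tau^{-2sj}\varphi_j}a$, applies stationary phase (handling the non-smoothness of $|\xi|^{2s}$ at $\xi=0$ by cutoffs and non-stationary phase exactly as you suggest), derives the eikonal $|\nabla\varphi_0|=r$, the phase transport $2s\,r^{2s-2}\nabla\varphi_0\cdot\nabla\varphi_1=q$ at order $\tau^0$, the amplitude transport $L_{1;0}a_0=0$ at order $\tau^{2s-1}$, and then recursively determines $\varphi_k$ and $a_l$ with the exponent bookkeeping organized via $\mathcal A=\mathbb N+2s\mathbb N$, before upgrading to an exact solution by the resolvent estimate of Proposition~\ref{first-solvability-proposition}. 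Your identification of the two technical obstacles (the $s$-dependent interleaving of exponents and the uniformity of stationary phase with a $\tau$-dependent phase) matches precisely what the paper handles in Steps~2 and~5 of Proposition~\ref{prop:approx-sol-rq}.
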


Note that in both Theorem \ref{thm_main1} and \ref{thm_main2}, if $x_0 \notin \overline{\Omega}$ and if $(\rho,\theta)$ are Riemannian normal coordinates for the metric $g_{jk} = r(x)^2 \delta_{jk}$ centered at $x_0$, then choosing $\varphi(x) = \varphi_0(x) = \mathrm{dist}_g(x,x_0) = \rho$ and $a_0(\rho,\theta) = e^{F(\rho,\theta)} b(\theta)$, where $F$ is a certain fixed function depending on $r$ and $q$, while $b$ is any function supported near $\theta_0$, yields solutions that concentrate near the geodesic $\rho \mapsto (\rho, \theta_0)$. See the proof of Theorem \ref{Th:stability-high-s} for more details.

As an application of the geometrical optics construction, we consider the inverse problem consisting in recovering the potential $q$ from Cauchy data in the case of fixed, but unknown, refraction index $r$. In particular, we have the following Hölder stability result.

\begin{The}\label{Th:stability-high-s}
    Let $s\in [\frac12,1)$, $\Omega\subseteq \R^n$ be open and bounded. Assume that the refraction index $r \in C^{\infty}(\R^n)$ is positive with $r\equiv 1$ in $\mathbb R^n\setminus\Omega$, and that the manifold $(\overline{\Omega}, g)$ with $g_{jk} = r(x)^2 \delta_{jk}$ is simple. Moreover, let $k,N\in\N$, and assume that $q_1, q_2 \in H^k(\R^n)$ are such that $\|q_1\|_{H^k}, \|q_2\|_{H^k} \leq N$, with $q_1=q_2$ in $\mathbb R^n\setminus\Omega$. Then there exist constants $\tau_0, \gamma, C>0$, with $\gamma$ depending only on $s$, such that
    $$\|q_1-q_2\|_{L^2(\Omega)} \leq C \left[ \sup_{\tau \geq \tau_0} \delta(C_{r,q_1}^{\tau}, C_{r,q_2}^{\tau}) \right]^\gamma.$$
\end{The}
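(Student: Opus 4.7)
The strategy is to combine the geometrical optics parametrix of Theorem \ref{thm_main1} with an Alessandrini-type integral identity and the $L^2$-stability of the geodesic X-ray transform on the simple manifold $(\overline{\Omega},g)$, then optimize in the large parameter $\tau$. Since $q_1 \equiv q_2$ outside $\Omega$, the symmetry of $(-\Delta)^s$ on $H^s(\R^n)$ yields the identity
\[
\int_\Omega (q_1-q_2)\, u_1 u_2\, dx = \ip{(C_{r,q_1}^{\tau} - C_{r,q_2}^{\tau}) u_1}{u_2}
\]
for any $u_j \in H^s(\R^n)$ with $L_{r,q_j} u_j = 0$ in $\Omega$. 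The right-hand side is controlled by $\delta(C_{r,q_1}^\tau, C_{r,q_2}^\tau)$ times appropriate norms of the exterior Cauchy data of $u_1, u_2$; for the GO solutions constructed below these norms will satisfy $\|u_j\|_{H^s(\R^n)} \lesssim \tau^s$, so the right-hand side is of size $\tau^{2s} \delta$.

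Next I pick a point $x_0 \notin \overline{\Omega}$ and, using simplicity, introduce the global Riemannian normal coordinates $(\rho,\theta)$ centered at $x_0$, in which every maximal $g$-geodesic meeting $\overline{\Omega}$ is a radial line. Setting $\varphi(x) = d_g(x,x_0)$, which satisfies $|\nabla\varphi|=r$ in a neighborhood of $\overline{\Omega}$, Theorem \ref{thm_main1} provides two GO solutions $u_1, u_2$ of $L_{r,q_j} u_j = 0$ with opposite phases $\pm \tau \varphi$ and principal amplitudes of the form $a_0^\pm(\rho,\theta) = e^{F^\pm(\rho,\theta)} b^\pm(\theta)$, where $F^\pm$ are the fixed transport-equation solutions determined by $r$ (and by $q_j$ when $s=1/2$) and $b^\pm \in C_c^\infty$ are localized near a chosen direction $\theta_0$. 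The phases cancel in the product $u_1 u_2$, and after a change of variables the leading contribution to the integral identity becomes
\[
\int (Iw)(\theta) \, b^+(\theta) b^-(\theta)\, J(\theta)\, d\theta,
\]
where $w := q_1-q_2$, $I$ is the geodesic X-ray transform on $(\overline{\Omega},g)$ and $J>0$ is a positive weight depending only on $F^\pm$ and the normal-coordinate Jacobian. Contributions from $a_j$ with $j \geq 1$ and from the remainder $R_N$ decay as negative powers of $\tau$ times $\|w\|_{L^2}$.

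Varying $\theta_0$ and $b^\pm$ gives pointwise control of $Iw$ by $C(\tau^{2s} \delta(C_{r,q_1}^\tau, C_{r,q_2}^\tau) + \tau^{-c_s}\|w\|_{L^2})$, where $c_s > 0$ depends on $s$. The $L^2$-stability of $I$ on simple manifolds (Mukhometov, Pestov--Uhlmann, Stefanov--Uhlmann; see \cite{PSU23}) then yields
\[
\|w\|_{L^2(\Omega)} \leq C\bigl(\tau^{2s} \sup_{\tau \geq \tau_0} \delta(C_{r,q_1}^\tau, C_{r,q_2}^\tau) + \tau^{-c_s}\|w\|_{L^2(\Omega)}\bigr).
\]
Absorbing the last term for $\tau$ large and then optimizing over $\tau \geq \tau_0$ — balancing $\tau^{2s}\delta$ against the a priori bound $\|w\|_{L^2} \leq C'N^{1/(k+1)}\|w\|_{H^{-k}}^{k/(k+1)}$ obtained from Sobolev interpolation of the control $\|w\|_{H^k} \leq 2N$ — produces an estimate of the form $\|w\|_{L^2} \leq C \delta^\gamma$ for some $\gamma = \gamma(s) > 0$.

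The main obstacle is the extension of $\varphi$ and $a_0^\pm$ to all of $\R^n$ required to have $u_j \in H^s(\R^n)$: because $(-\Delta)^s$ is nonlocal, a priori these extensions could both spoil the parametrix error and prevent the concentration of $u_1 u_2$ along a single geodesic through $\Omega$. The key point is that Theorem \ref{thm_main1} already delivers a remainder with $\|R_N\|_{H^s} = O(\tau^{-C_s N})$ for arbitrarily large $N$, which suffices to absorb the $\tau^s$ growth from the Cauchy data pairing as well as the extension-induced tails of the GO ansatz. Once this is in hand, the identification of the leading term with the X-ray transform $Iw$ and the subsequent optimization proceed essentially as in the classical ($s=1$) Helmholtz inverse problem.
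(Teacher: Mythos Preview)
Your overall strategy---Alessandrini identity, opposite-phase GO solutions with $\varphi=d_g(\cdot,x_0)$, reduction to the geodesic X-ray transform, and optimization in $\tau$---matches the paper. But the proof has a genuine gap in how you handle the subprincipal amplitude contributions, and your final optimization does not close as written.

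The claim that ``contributions from $a_j$ with $j\geq 1$ \ldots\ decay as negative powers of $\tau$ times $\|w\|_{L^2}$'' is incorrect. The higher amplitudes $a_l$ are obtained from $a_0$ by iterated transport equations whose sources are \emph{high-order differential operators} applied to $a_0,\dots,a_{l-1}$; hence $\|a_l\|_{L^2}\lesssim \|a_0\|_{H^{t_M}}$ for some large $t_M$, and the cross terms in $\int (q_1-q_2)u_1\overline{u_2}$ are of size $\tau^{-\alpha_1}\|a_{0,1}\|_{H^{t_M}}\|a_{0,2}\|_{H^{t_M}}$. These norms depend on the choice of $b^\pm$, not on $\|w\|_{L^2}$. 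Consequently your step ``varying $\theta_0$ and $b^\pm$ gives pointwise control of $Iw$'' fails: to get pointwise values you would need $b^\pm$ to approximate delta functions, and then the $H^{t_M}$ norms (and hence the error constant) blow up. This is also why your absorption argument at the end collapses---had it been valid it would give \emph{Lipschitz} stability $\|w\|_{L^2}\lesssim \tau_1^{2s}\delta$, with nothing left to optimize and no role for the a~priori $H^k$ bound on $q_j$.

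The paper avoids pointwise control altogether. It fixes $b_2\equiv 1$ and chooses $b_1(\theta)=I(I^*IQ)(p,\theta)\,\nu_p\!\cdot\!\theta$, where $Q$ is a smooth weight times $q_1-q_2$; by Santal\'o's formula the main term becomes $\|I^*IQ\|_{L^2(\Omega')}^2$, while the error is $\tau^{-\alpha_1}\|Q\|_{H^{t_M}}$, bounded uniformly precisely by the hypothesis $\|q_j\|_{H^k}\leq N$. The stability input is then the ellipticity of the normal operator, $\|Q\|_{H^{-1}}\lesssim \|I^*IQ\|_{L^2}$, not an $L^2\!\to\!L^2$ estimate for $I$. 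This yields $\|Q\|_{H^{-1}}^2\lesssim \delta\tau^{2s}+\tau^{-\alpha_1}$, and interpolation between $H^{-1}$ and $H^{t_M}$ (not $H^{-k}$) followed by optimizing $\tau=\delta^{-1/(2s+\alpha_1)}$ gives the H\"older exponent. To fix your argument you must track the $b^\pm$-dependence of the error, replace the pointwise step by the normal-operator trick (or an equivalent uniform pairing), and use the a~priori $H^k$ bound to control the resulting high-regularity term.
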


The symbol $C_{r,q_j}^{\tau}$ indicates the Cauchy data set associated to the operator $$P_j := (-\Delta)^s - \tau^{2s} r^{2s} + q_j,$$ while the function $\delta$ measures the distance between the Cauchy data sets. In the case that $\Omega$ is Lipschitz and the exterior Dirichlet-to-Neumann (DN) map $\Lambda_{P_j}$ is well defined, we have
$$C_{r,q_j}^{\tau}:=\{ (f, \Lambda_{P_j} f) \in H^s(\Omega_e)\times H^{-s}(\Omega_e)\}.$$
In the same case, we can define the distance $\delta$ as
\begin{align*}
    \delta(C_{r,q_1}^{\tau},C_{r,q_2}^{\tau}) :&= \max_{j\neq k} \sup_{\substack{
     f_1 \in H^s(\Omega_e): \\ \|f_1\|=1} 
} \inf_{ f_2\in H^s(\Omega_e)} \left(\| f_1-f_2 \|_{H^s(\Omega_e)} + \| \Lambda_{P_j} f_1 - \Lambda_{P_k} f_2 \|_{H^{-s}(\Omega_e)} \right) .
\end{align*} 
We will rigorously define both the Cauchy data and the distance function $\delta$ in greater generality in Section \ref{sec:applications}. Note that existing stability results for fixed frequency have a logarithmic modulus \cite{RS17a} and this is optimal \cite{RS-expo}. The fact that we use many frequencies leads to the improved H\"older stability result in Theorem \ref{Th:stability-high-s}.

Because of the particular nature of the geometrical optics solutions in the regime $s\in (0,\frac 12)$, which differs substantially from the classical case $s=1$, we can apply the method in Theorem \ref{Th:stability-high-s} only in the regime $s\in[\frac12,1)$. A precise account of this observation is given in Remark \ref{rem:why-not}. 

\subsection{Motivation}\label{sec-motivation}
The main motivation of our study on the wave properties of fractional Helmholtz equations comes from the field of nonlocal elasticity. 

Local (or classical) continuum theories of elasticity, while in perfect agreement with experiments involving only measurements at scales which are large in comparison to the intermolecular distances of the elastic material, are unable to predict physical phenomena at the smallest scales, where the intrinsic granularity of the examined material becomes important. As observed by Eringen in \cite{Eri84} (see also \cite{BenZion} and \cite{Failla-Zingales-2020} for a recent overview), one of the most critical failures of the local theory can be identified in the predicted value of the stress field at a sharp crack tip, which produces a singularity, clearly contradicting experimental measurements. The local theory also wrongly predicts that phase velocities should be independent of wavelength. This problematic effect is seen also in the acoustic (or scalar) case. As discussed in \cite{Petersdorff-Stephan}, the solutions of the Dirichlet problem for the Laplacian in a domain with a polyhedral boundary present so-called \emph{corner singularities}, and as such do not have the regularity which would be expected of solutions of elliptic equations in regular domains. When the domain is an infinite wedge, a polyhedral cone, or a polygon in the 2D case, the solutions exhibit a similar behaviour.

While many strategies have been proposed to correct these problematic observations, the most elegant is the one by Eringen, Speziale and Kim (\cite{ESK77}), which does not need to modify the ordinary hypothesis of rupture based on the maximum stress hypothesis as in the work by Griffith and collaborators (see e.g. \cite{Griffith1920}). By incorporating elements of the lattice structure of the elastic material into the model, the theory of \emph{nonlocal elasticity} introduced in \cite{ESK77} produces predictions in complete accordance with experimental data in the sharp crack tip case, and reduces to the classical theory in the limit of vanishing atomic scale. 

Since the publication of the cited seminal papers by Eringen \cite{Eringen-1972, Er02}, the theory of nonlocal elasticity has been widely adopted by the scientific community and extended in various directions. A variational formulation of nonlocal elasticity has been the object of the works of Mindlin \cite{Mindlin-1964, Mindlin-1965}. Time-fractional wave equations have been used by Zorica and Oparnica \cite{ZoricaOparnica2020} for modeling viscoelastic behavior. Nonlocal elasticity was also studied in relation to inverse problems by Askes and Aifantis \cite{AskesAifantis}. Other examples include the recent studies \cite{WS22, WL22}, which have considered aspects of the theory concerning the relation between elastic stress and the equilibrium corner angle, as well as rigid inclusions in nonlocal elastic materials. The overview by Jin and Rundell \cite{JinRundell2015} contains a multitude of references on fractional derivative equations and related inverse problems.

In our work \cite{CdHS24}, we considered a model of nonlocal elasticity related to the fractional linear gradient-elasticity model in \cite{CarpinteriCornettiSapora-2011, TarasovAifantis-2018}, in which the nonlocal stress is defined as the Riesz fractional integral of the strain field. In that occasion, we have proved uniqueness in the inverse problem of determining the Lam\'e parameters of the isotropic material from nonlocal DN data, under the additional assumption that the Poisson ratio is fixed a priori. This has shown the intrinsic limits of the nonlocal technique we used to obtain such result. In this paper, we shall generalize the well known geometrical optics technique, which is classically available for the \emph{local} case, to the inverse problem for the \emph{fractional} Helmholtz equation. We understand the present article, which considers the acoustic (scalar) case, as a first step towards the wider goal of studying geometrical optics for the fractional elastic (vector) operator we introduced in \cite{CdHS24}. This will be the object of future work.

\subsection{Connection to the literature}
The fractional Calderón problem is a prototypical inverse problem for a nonlocal operator, which in recent years has attracted the attention of many authors. Introduced in the seminal paper \cite{GSU16}, it asks to recover the potential $q$ in the fractional Schr\"odinger equation
$$\begin{cases}
    (-\Delta)^su+qu =0, & \Omega \\ u =f, & \Omega_e:=\mathbb R^n\setminus\overline\Omega
\end{cases}$$
from partial data given in the form of a nonlocal Dirichlet-to-Neumann (DN) map $\Lambda_q$, which is defined as
$$ \Lambda_q : H^s(\Omega_e)\ni f \mapsto (-\Delta)^su|_{\Omega_e}\in H^s(\Omega_e)^*. $$
Here $s\in (0,1)$, $n\in \mathbb N$, $\Omega$ is a bounded open set, and $(-\Delta)^s$ is the fractional Laplace operator, which can be defined via the Fourier transform $\mathcal F$ as 
$$(-\Delta)^su = \mathcal F^{-1}(|\xi|^{2s}\mathcal F u(\xi)),$$
or equivalently as the singular integral 
$$ (-\Delta)^su(x) := c_{n,s}PV\int_{\mathbb R^n}\frac{u(x)-u(y)}{|x-y|^{n+2s}}dy, $$
for a fixed constant $c_{n,s}:=\frac{4^s\Gamma(n/2+s)}{\pi^{n/2}|\Gamma(-s)|}$. The main technique available for the solution of this inverse problem was already given in \cite{GSU16}, where it was shown that a potential $q\in L^\infty$ can be uniquely recovered. The method was based on the strong unique continuation property of the fractional Laplacian, which can be shown to be equivalent to the Runge approximation property, and on the Alessandrini identity. 

The uniqueness result was later improved in \cite{RS17b} to potentials in $q\in L^{n/2s}$. In the paper \cite{GRSU18}, the authors further showed uniqueness and reconstruction even in the case of a single measurement. The stability and instability properties of the fractional Calder\'on problem were also explored. The works \cite{RS17a, RS17b, R20} proved exponential stability, in accordance to the results known in the classical case. In the case of finite dimension, Lipschitz stability was obtained in \cite{RS18}. As for instability, the general technique of \cite{KRS21} based on the study of entropy numbers was recently applied to the fractional Calder\'on problem in \cite{BCR24}. The fractional Schrödinger equation has been studied in many variations, including with general local (\cite{CMRU22,CLR18, CMR21}), magnetic (\cite{C20, Li20a, Li20b}) and quasilocal perturbations (\cite{C21}). The case of closed and complete Riemannian manifolds was also considered (\cite{FKU24,CO23}). Many more related nonlocal operators were the objects of study of \cite{BS22, CGR22, GLX17, KLW21, LLR19}, among others.

The conductivity formulation of the fractional Calder\'on problem was introduced in \cite{C20a}, making use of the nonlocal vector calculus of \cite{DGLZ12, DGLZ13}. For this formulation, one considers the fractional gradient $\nabla^s$ defined as
$$ \nabla^su(x,y):= \frac{c^{1/2}_{n,s}}{\sqrt{2}}\frac{u(y)-u(x)}{|x-y|^{n/2+s+1}}(x-y),$$
using which one can pose the fractional conductivity equation
$$\begin{cases}
    \mathbf C^s_\gamma u := (\nabla\cdot)^s(\gamma^{1/2}(x)\gamma^{1/2}(y)\nabla^s u) =0, & \Omega \\ u =f, & \Omega_e
\end{cases}.$$
Here $\gamma$ is a scalar conductivity function, while $(\nabla\cdot)^s := (\nabla^s)^*$ is the fractional divergence, i.e. the adjoint of the fractional gradient. Observe that $(\nabla\cdot)^s\nabla^s = (-\Delta)^s$. The inverse problem for the fractional conductivity equation consists in finding $\gamma$ from a nonlocal DN map. It was shown in \cite{C20a} to be solvable by means of the fractional Liouville reduction, a transformation which reformulates the problem in fractional Schr\"odinger form. The paper \cite{C20a} also shows how the fractional conductivity operator naturally arises as a limit of a random walk with long jumps on a square lattice (see also \cite{Val09}). This interpretation is extended to a general graph $G$ in the recent work \cite{CL24}, where the inverse problem consisting in recovering both the edge set of $G$ and the conductivity $\gamma$ from partial random walk data is solved up to natural constraints by means of a new algebraic method. In \cite{RZ22a} the results of \cite{C20a} are improved to conductivities in $H^{s,n/s}$. This inverse problem was also considered in the higher order regime (\cite{CMR21}), anisotropic (\cite{Cov22}) and global cases (\cite{RZ22b,RZ22c}). Stability and instability results for the fractional conductivity formulation are the objects of \cite{CRTZ22,BCR24}. 

 In particular, a fractional isotropic elasticity operator was studied by the authors of this paper in \cite{CdHS24}, where uniqueness was proved to hold under strong assumptions on the Poisson ratio. Improving the results of this study with new powerful techniques borrowed from the theory of inverse problems for local operators is one of the main motivations of this paper (see Section \ref{sec-motivation}).
 
 We refer to the surveys \cite{S17,C24} for more references on the fractional Calderón problem in both the Schrödinger and conductivity formulations. 

\subsection{Organization of the article}

The rest of the article is organized as follows. Section \ref{Sec:approx} is dedicated to the construction of approximate geometrical optics solutions for the fractional Helmholtz equation. We first consider the case of constant coefficients, and then prove the result for variable refraction index and potential. In Section \ref{sec:exact} we employ a resolvent estimate to upgrade the approximate geometrical optics solutions from the previous section to exact solutions, thus proving Theorems \ref{thm_main1} and \ref{thm_main2}. Applications to inverse problems are considered in Section \ref{sec:applications}, where the stability result in Theorem \ref{Th:stability-high-s} is proved. Finally, in the Appendix we show how to compute the transport equation for the principal amplitude of the geometrical optics solutions in polar normal coordinates.

\section{Geometrical optics solutions for the fractional Helmholtz equation}\label{Sec:approx}

In this section we construct geometrical optics solutions for different fractional equations. We start in section \ref{subsec-helm-fix} with the fractional Helmholtz equation with constant coefficients, for which we construct geometrical optics solutions directly by using the Fourier transform. Next, we consider the construction of approximate geometrical optics solutions for the case of variable refraction index and potential (section \ref{subsec-helm-rq}). In the coming section \ref{sec:exact} we shall upgrade these solutions from approximate to exact.

\subsection{The case of constant coefficients} \label{subsec-helm-fix}

As a warmup, we will construct an approximate geometrical optics solution to the equation 
\begin{equation*}
    ((-\Delta)^s - \tau^{2s}) u = 0 \text{ in $\Omega \subset \R^n$}.
\end{equation*}

\begin{Pro}\label{prop:exact-sol-helm}
    For any $M\in\N$ there exists a positive constant $C_M$ and amplitude functions $\{a_l\}_{l=0}^{M}$ such that the function $$u(x):= e^{i\tau\alpha\cdot x} a(x), \qquad\mbox{with}\qquad a(x): = \sum_{l=0}^{M} \tau^{-l}a_l(x)$$ and $\alpha\in \mathbb S^{n-1}$, verifies \begin{equation*}
    \|((-\Delta)^s - \tau^{2s}) u\|_{L^2(\Omega)} \leq C_M\tau^{-M}.
\end{equation*}
Moreover, the principal amplitude $a_0$ solves the transport equation $\alpha\cdot\nabla a_0 =0$.
\end{Pro}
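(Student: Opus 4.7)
The plan is to carry out the computation directly on the Fourier side, since with constant coefficients the fractional Laplacian becomes simply multiplication by a shifted symbol. Writing $u(x)=e^{i\tau\alpha\cdot x}a(x)$, the translation property of the Fourier transform yields
$$\fraclaplace u(x)=e^{i\tau\alpha\cdot x}\,\ifourier\!\bigl(\abs{\xi+\tau\alpha}^{2s}\fourier a(\xi)\bigr)(x).$$
Since $\abs{\alpha}=1$, the function $g(\eta):=\abs{\alpha+\eta}^{2s}$ is smooth in a neighbourhood of $\eta=0$, so on the set $\{\abs{\xi}\le\tau/2\}$ one has $\abs{\xi+\tau\alpha}^{2s}=\tau^{2s}g(\xi/\tau)$ with a valid Taylor expansion in powers of $\xi/\tau$. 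Inverse Fourier transforming converts the monomials $\xi^k$ into constant-coefficient differential operators, producing an asymptotic expansion of $\fraclaplace u$ in powers of $\tau^{-1}$ that can be matched term by term against the ansatz $a=\sum_{l=0}^{M}\tau^{-l}a_l$.

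Carrying this out, I would write
$$g(\eta)=\sum_{\abs{k}\le N}\frac{\p^k g(0)}{k!}\eta^k+R_N(\eta),\qquad \abs{R_N(\eta)}\le C\abs{\eta}^{N+1}\ \text{for}\ \abs{\eta}\le 1/2,$$
and observe that $\ifourier(\xi^k\fourier a)$ is a differential operator $P_k a$ of order $\abs{k}$. A direct computation at $\eta=0$ gives $P_0=\id$ and $P_1=-2is\,\alpha\cdot\nabla$. Substituting $a=\sum_{l=0}^{M}\tau^{-l}a_l$ and collecting powers of $\tau$, the $\tau^{2s}$ contribution cancels against $\tau^{2s}u$, and the equation $(\fraclaplace-\tau^{2s})u=0$ reduces, modulo remainders, to the triangular system
$$P_1 a_l=-\sum_{k=2}^{l+1}P_k\,a_{l+1-k},\qquad l=0,1,\ldots,M,$$
whose base case $l=0$ is precisely the transport equation $\alpha\cdot\nabla a_0=0$. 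Each subsequent equation has the form $\alpha\cdot\nabla a_l=F_l(a_0,\ldots,a_{l-1})$ with smooth right-hand side and is solvable by integration along the characteristic lines $t\mapsto x+t\alpha$.

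For the error estimate I would apply Plancherel on $\R^n$ and split the Fourier integral at $\abs{\xi}=\tau/2$. On the low-frequency region $\abs{\xi}\le\tau/2$, the Taylor remainder contributes a term whose $L^2$ norm is bounded by $C\tau^{2s-N-1}\norm{\abs{\xi}^{N+1}\fourier a}_{L^2}$; the latter factor is finite because $\fourier a$ is Schwartz, and choosing $N$ sufficiently large makes this $O(\tau^{-M})$. On the complementary high-frequency region $\abs{\xi}>\tau/2$, the symbol $\abs{\xi+\tau\alpha}^{2s}$ grows only polynomially while $\fourier a$ decays faster than any polynomial, yielding $O(\tau^{-K})$ for every $K$. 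Finally, the residual terms at orders $\tau^{2s-j}$ with $j\ge M+2$ coming from the truncation of the amplitude series at $l=M$ contribute $O(\tau^{2s-M-2})$, which is $O(\tau^{-M})$ since $s<1$. The main subtlety—and the reason the frequency splitting is essential—is the non-smoothness of $\abs{\xi}^{2s}$ at the origin, which after the shift becomes a singularity at $\xi=-\tau\alpha$; fortunately this point is displaced far from the effective support of $\fourier a$ for large $\tau$, so the cutoff handles it cleanly.
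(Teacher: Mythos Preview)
Your approach is essentially the paper's: both conjugate by $e^{i\tau\alpha\cdot x}$ to reduce to the shifted symbol $|\xi+\tau\alpha|^{2s}$, expand it in powers of $\tau^{-1}$ (you via the Taylor expansion of $g(\eta)=|\alpha+\eta|^{2s}$, the paper via an iterated binomial series), and solve the resulting hierarchy of transport equations recursively. Your remainder treatment with the frequency cutoff at $|\xi|=\tau/2$ is in fact more careful than the paper's somewhat formal handling of the series tail; the one point to tidy up is that a nontrivial $a_0$ satisfying $\alpha\cdot\nabla a_0=0$ on all of $\R^n$ cannot be Schwartz, so take the $a_l\in C^\infty_c(\R^n)$ solving the transport equations only in a neighbourhood of $\overline\Omega$---then $\hat a$ is genuinely Schwartz and, since the differential operators $P_k$ are local, the cancellation still holds on $\Omega$ and your $L^2(\Omega)$ estimate goes through as written.
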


\begin{proof}
We start by computing 
\begin{align*}
    (-\Delta)^su & = \frac{1}{(2\pi)^n}\int_{\R^n} e^{ix\cdot\xi}|\xi|^{2s}\mathcal F(e^{i\tau\alpha\cdot x}a)d\xi
    \\ &
    = \frac{1}{(2\pi)^n}\int_{\R^n} e^{ix\cdot\xi}|\xi|^{2s}\hat a(\xi-\tau\alpha)d\xi 
    \\ &
    = \frac{e^{i\tau\alpha\cdot x}}{(2\pi)^n}\int_{\R^n} e^{ix\cdot\xi}|\xi+\tau\alpha|^{2s}\hat a(\xi)d\xi.
\end{align*}
Assume that $\alpha\cdot\xi \neq 0$. Then using the formula for the binomial series $(1+x)^s = \sum_{j=0}^\infty \binom{s}{j} x^{j}$ we compute

\begin{align*}
    |\xi+\tau\alpha|^{2s} & = (|\xi|^2 + \tau^2 + 2\tau\alpha\cdot\xi)^s
    \\ & 
    = \tau^{2s}\left(1+2\tau^{-1}(\alpha\cdot\xi)\left(  1+  2^{-1}\tau^{-1}|\xi|^{2}(\alpha\cdot\xi)^{-1}     \right)\right)^s
    \\ & 
    = \tau^{2s}\sum_{j=0}^\infty \binom{s}{j} 2^j\tau^{-j}(\alpha\cdot\xi)^j\left(  1+  2^{-1}\tau^{-1}|\xi|^{2}(\alpha\cdot\xi)^{-1}     \right)^j 
    \\ & 
    = \tau^{2s} + \tau^{2s}\sum_{j=1}^\infty \binom{s}{j} 2^j\tau^{-j}(\alpha\cdot\xi)^j\left(  1+  2^{-1}\tau^{-1}|\xi|^{2}(\alpha\cdot\xi)^{-1}     \right)^j
    \\ & 
    = \tau^{2s} + \tau^{2s}\sum_{j=1}^\infty \binom{s}{j} 2^j\tau^{-j}(\alpha\cdot\xi)^j\sum_{k=0}^j \binom{j}{k} 2^{-k}\tau^{-k}|\xi|^{2k}(\alpha\cdot\xi)^{-k}
    \\ &  
    = \tau^{2s} + \sum_{j=1}^\infty \sum_{k=0}^j  c'_{s,j,k} \tau^{2s-j-k}|\xi|^{2k}(\alpha\cdot\xi)^{j-k},
\end{align*}
where $c'_{s,j,k}:=\binom{s}{j}\binom{j}{k} 2^{j-k}$. By the assumption $ a(x) = \sum_{l=0}^{M} a_l(x)\tau^{-l}$, we have $ \hat a(\xi) = \sum_{l=0}^{M} \hat a_l(\xi)\tau^{-l}$. Then, because for all $\alpha\in\mathbb S^{n-1}$ the set of $\xi\in\mathbb R^n$ orthogonal to $\alpha$ has vanishing measure, we have

\begin{align*}
    (-\Delta)^su & = \frac{e^{i\tau\alpha\cdot x}}{(2\pi)^n}\int_{\R^n} e^{ix\cdot\xi}\tau^{2s}\hat a(\xi)d\xi 
    \\ & \quad 
    +\frac{e^{i\tau\alpha\cdot x}}{(2\pi)^n}\int_{\R^n} e^{ix\cdot\xi}\left( \sum_{j=1}^\infty \sum_{k=0}^j  c'_{s,j,k} \tau^{2s-j-k}|\xi|^{2k}(\alpha\cdot\xi)^{j-k}\right)\left( \sum_{l=0}^{M}  \hat a_l(\xi)\tau^{-l} \right)d\xi
    \\ & 
    = e^{i\tau\alpha\cdot x}\tau^{2s}a + \frac{e^{i\tau\alpha\cdot x}}{(2\pi)^n} \sum_{j=1}^\infty \sum_{k=0}^j \sum_{l=0}^{M}  c'_{s,j,k}\tau^{2s-(j+k+l)} \int_{\R^n} e^{ix\cdot\xi}  |\xi|^{2k}(\alpha\cdot\xi)^{j-k} \hat a_l(\xi) d\xi
    \\ & 
    = \tau^{2s}u +  e^{i\tau\alpha\cdot x} \sum_{j=1}^\infty \sum_{k=0}^j \sum_{l=0}^{M}  c'_{s,j,k}\tau^{2s-(j+k+l)} \mathcal{F}^{-1}\left[  |\xi|^{2k}(\alpha\cdot\xi)^{j-k} \hat a_l(\xi) \right](x)
    \\ & 
    = \tau^{2s}u +  e^{i\tau\alpha\cdot x} \sum_{\nu=1}^\infty \tau^{2s-\nu} \sum_{l=0}^{\min\{\nu-1,M\}} \mathcal{F}^{-1}\{  \psi_{\nu,l}(\xi)\hat a_l(\xi) \}(x)
    \\ & 
    = \tau^{2s}u +  e^{i\tau\alpha\cdot x} \sum_{\nu=1}^\infty \tau^{2s-\nu} \sum_{l=0}^{\min\{\nu-1,M\}} \Psi_{\nu,l}a_l,
\end{align*}
where $ \Psi_{\nu,l} $ is the operator whose symbol $ \psi_{\nu,l} $ is given by $$ \psi_{\nu,l}(\xi):= \sum_{\substack{j\in\N_+, k \in \{0,...,j\} : \\ \nu-l=j+k}} c'_{s,j,k}  |\xi|^{2k}(\alpha\cdot\xi)^{j-k}.$$
Observe that $\psi_{l+1,l}(\xi)=2s\alpha\cdot\xi$ for all $l\in\N$. 
We assume that the coefficient of $\tau^{2s-1}$ vanishes, that is
$$ 0= \mathcal{F}^{-1}\{  \psi_{1,0}(\xi)\hat a_0(\xi) \} = 2s\alpha\cdot\mathcal{F}^{-1}\{\xi\hat a_0(\xi) \} = -2is\alpha\cdot\nabla a_0 \quad\Rightarrow\quad \alpha\cdot\nabla a_0 =0. $$
Thus we see that $a_0$ solves the time-independent transport equation. Now we have
\begin{align*}
(-\Delta)^su-\tau^{2s}u & = e^{i\tau\alpha\cdot x} \sum_{\nu=2}^\infty \tau^{2s-\nu} \sum_{l=0}^{\min\{\nu-1,M\}} \Psi_{\nu,l}a_l  
\\ & 
= e^{i\tau\alpha\cdot x} \tau^{2s-2}\sum_{\nu=0}^\infty \tau^{-\nu} \sum_{l=0}^{\min\{\nu+1,M\}} \Psi_{\nu+2,l}a_l
\\ & 
= e^{i\tau\alpha\cdot x} \tau^{2s-2} \left(\sum_{\nu=0}^{M-1} \tau^{-\nu} \sum_{l=0}^{\nu+1} \Psi_{\nu+2,l}a_l + \sum_{\nu\geq M} \tau^{-\nu} \sum_{l=0}^{M} \Psi_{\nu+2,l}a_l\right)
\\ & 
= e^{i\tau\alpha\cdot x} \tau^{2s-2}\sum_{\nu=0}^{M-1} \tau^{-\nu} \left( \Psi_{\nu+2,\nu+1}a_{\nu+1} + \sum_{l=0}^{\nu} \Psi_{\nu+2,l}a_l\right) + O(\tau^{-M}).
\end{align*}
Requiring that the terms in parenthesis vanish, for all $\nu < M$ we get
$$\alpha\cdot\nabla a_{\nu+1}=\frac i{2s}\Psi_{\nu+2,\nu+1}a_{\nu+1} = -\frac i{2s}\sum_{l=0}^{\nu} \Psi_{\nu+2,l}a_l,$$
that is $a_{\nu+1}$ solves an inhomogeneous time-independent transport equation, in which the right hand side is computed recursively. This gives the required estimate and completes the proof.
\end{proof}

\subsection{The case of variable refraction index and potential}\label{subsec-helm-rq}

In this section we will construct an approximate geometrical optics solution to the equation 
\begin{equation}\label{rq-equation}
    ((-\Delta)^s - \tau^{2s} r(x)^{2s} + q(x)) u = 0 \text{ in $\Omega \subset \R^n$},
\end{equation}
where $r(x), q(x)$ are smooth positive functions (index of refraction and potential). 
\begin{Pro}\label{prop:approx-sol-rq}
    For all $M,\beta\in\N$ there exist two constants $C_s, C_{M,\beta}>0$ such that $$u_M(x):= e^{i\tau\varphi(x)} a(x)$$ verifies $$\|((-\Delta)^s-\tau^{2s}r(x)^{2s}+q(x))u_M(x)\|_{H^\beta_{scl}(\Omega)}\leq C_{M,\beta} \tau^{-M},$$
    where the phase and amplitude functions $\varphi,a$ are given by $$\varphi(x):= \sum_{j=0}^{F_M} \tau^{-2s j}\varphi_j(x), \qquad a(x):= \sum_{l=0}^{A_M} \tau^{-\alpha_l} a_{l}(x)$$ with $A_M = F_M := C_sM \in\N$ and $$\mathcal A := \{\alpha_l\}_{l\in\N} = \N + (2s-\floor{2s})\N, \qquad \alpha_l < \alpha_{l+1} \quad \mbox{for all}\quad l\in\N,$$
    for two suitable sequences $\{a_l\}_{l\in\N}, \{\varphi_j\}_{j\in\N}$ independent of $M$. Moreover, if $s\in[1/2,1)$ then $\varphi_j$ can be chosen to vanish for all $j\neq 0$.
\end{Pro}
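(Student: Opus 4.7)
The plan is to substitute the ansatz $u_M=e^{i\tau\varphi}a$ into $L$, express $(-\Delta)^s(e^{i\tau\varphi}a)$ as an oscillatory integral and expand it by stationary phase after cutting off the singular locus of $|\xi|^{2s}$, then collect terms by powers of $\tau$ and solve the resulting cascading eikonal and transport equations recursively along characteristics. The construction will be hierarchical: $\varphi_0$ from the eikonal, $a_0$ from the first transport, and then $a_l$ (and, in the low-$s$ regime, $\varphi_j$) from successive linear equations.

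Concretely, after the Fourier representation and the rescaling $\xi=\tau\eta$ one obtains
$$(-\Delta)^s(e^{i\tau\varphi}a)(x)=\frac{\tau^{n+2s}}{(2\pi)^n}\int\!\!\int e^{i\tau[(x-y)\cdot\eta+\varphi(y)]}|\eta|^{2s}a(y)\,dy\,d\eta.$$
The critical point of the phase in $(y,\eta)$ is $y=x$, $\eta=\nabla\varphi(x)$, and the eikonal $|\nabla\varphi_0|=r>0$ keeps it bounded away from $\{\eta=0\}$ where $|\eta|^{2s}$ is singular. I would split $|\eta|^{2s}=\chi(\eta)|\eta|^{2s}+(1-\chi(\eta))|\eta|^{2s}$ with $\chi\in C^\infty_c(\R^n)$ equal to one on a neighbourhood of $\{|\eta|=r(x)\}$ and vanishing near the origin. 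On the support of $1-\chi$ the $y$-phase gradient $\nabla\varphi(y)-\eta$ is bounded below, so iterated integration by parts in $y$ gives an $O(\tau^{-\infty})$ contribution, while on the support of $\chi$ the amplitude is smooth and the standard stationary-phase/Kuranishi expansion produces
$$e^{-i\tau\varphi}(-\Delta)^s(e^{i\tau\varphi}a)\sim\sum_{k\geq 0}\tau^{2s-k}L_k(\varphi)a,$$
with $L_0(\varphi)a=|\nabla\varphi|^{2s}a$ and $L_1(\varphi)$ a first-order differential operator in $a$ whose coefficients come from pairing $\nabla_\eta|\eta|^{2s}=2s|\eta|^{2s-2}\eta$ and $\nabla_\eta^2|\eta|^{2s}$ at $\eta=\nabla\varphi$ against $\varphi''$. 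A direct computation then produces the combination $|\nabla\varphi|^{2s-2}(2\nabla\varphi\cdot\nabla a+b_s a)$, matching the $s=1$ case at leading order.

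Inserting $\varphi=\sum_j\tau^{-2sj}\varphi_j$ and $a=\sum_l\tau^{-\alpha_l}a_l$ into $e^{-i\tau\varphi}Lu_M$ yields a formal series in powers of $\tau$. The coefficient of $\tau^{2s}$ is $|\nabla\varphi_0|^{2s}-r^{2s}$, giving the eikonal, which is solved globally under the simplicity assumption via the polar normal coordinates of the appendix. The coefficient of $\tau^{2s-1}$ gives the principal transport equation for $a_0$ (with a $+q$ source when $s=1/2$, since $\tau^0$ and $\tau^{2s-1}$ coincide there). For $s\in[1/2,1)$ the exponent $-\alpha_l$ of each $qa_l$ term is never larger than the exponent $2s-1-\alpha_l$ of $L_1(\varphi_0)a_l$, so the potential at each level is absorbed as a source in the transport equation for the next amplitude; all exponents that appear stay in $\N+(2s-1)\N$ and no phase corrections are needed. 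For $s\in(0,1/2)$ the source $qa_0$ sits at order $\tau^0$, strictly above the subprincipal $\tau^{2s-1}$, and cannot be killed by an amplitude correction; instead it forces the first phase correction $\varphi_1$, whose equation is the linear one obtained by expanding $|\nabla\varphi|^{2s}$ to first order in $\tau^{-2s}\varphi_1$ about $\nabla\varphi_0$ and matching to $q$. Iterating, the $\varphi_j$ and $a_l$ are produced alternately, and the exponents that arise form exactly $\N+2s\N$.

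The main obstacle is the combinatorial bookkeeping: one must verify inductively that the powers generated by the recursion remain inside $\mathcal A=\N+(2s-\lfloor 2s\rfloor)\N$, that $(\alpha_l)$ is strictly increasing with $\alpha_l\to\infty$, and that truncating at $A_M,F_M=O(M)$ cancels every term of size $\gtrsim\tau^{-M}$. Once this is in place, what remains in $Lu_M$ are finitely many oscillatory terms carrying a factor $\tau^{-M}$ and a smooth compactly supported amplitude, together with the $O(\tau^{-\infty})$ tail from the $1-\chi$ cutoff; estimating these in $H^\beta_{scl}(\Omega)$ is then straightforward, since semiclassical derivatives applied to $e^{i\tau\varphi}\cdot(\mathrm{smooth})$ produce at worst polynomial factors in $\tau$ which are absorbed by enlarging $M$ proportionally to $\beta$. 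The re-expansion of $|\nabla\varphi|^{2s}$ about $\nabla\varphi_0$ needed in the $s<1/2$ regime would be handled by a binomial series analogous to the one used in Proposition \ref{prop:exact-sol-helm}, ensuring that the set of exponents is closed under the operations that arise.
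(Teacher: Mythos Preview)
Your approach is essentially that of the paper: cut off the singularity of $|\xi|^{2s}$, apply stationary phase on the remaining smooth piece, derive the eikonal and a recursive cascade of transport equations, and verify that the exponent set $\mathcal A$ is closed under the operations generated by the recursion, with the two regimes $s\in[1/2,1)$ and $s\in(0,1/2)$ distinguished exactly as you describe. The only notable difference is that the paper absorbs the phase corrections $\varphi_j$ ($j\geq 1$) into the amplitude \emph{before} the stationary-phase step, writing $u_M=e^{i\tau\varphi_0}\tilde a$ with $\tilde a:=e^{i\tau(\varphi-\varphi_0)}a$ and applying H\"ormander's Theorem~7.7.7 to the $\tau$-independent phase $\varphi_0$ alone; this sidesteps the need to justify a stationary-phase expansion with a $\tau$-dependent phase and mechanises the ``combinatorial bookkeeping'' you flag, since each $D_x^\beta\tilde a$ then produces explicit polynomial factors in $\tau^{1-2sj}$ already sorted by power.
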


\begin{Rem}
    Observe that when $s=1/2$ we simply have $\{\alpha_l\}_{l\in\N} = \N$. Moreover, the powers of $\tau$ appearing in the definition of the phase function $\varphi$ emerge naturally from the proof of Proposition \ref{prop:approx-sol-rq}.
\end{Rem}

\begin{proof}
\textbf{Step 1.} \emph{(Stationary-nonstationary phase)} We let $\tilde a(x) := e^{i\tau\sum_{j=1}^{F_M} \tau^{-2sj}\varphi_j(x)} a(x)$, so that $u_0(x) = e^{i\tau\varphi_0(x)}\tilde a(x)$. Let $\chi \in C^{\infty}_c(\R^n)$ satisfy $0 \leq \chi \leq 1$, $\chi(\xi) = 1$ for $|\xi| \leq 1/2$, and $\chi(\xi) = 0$  for $|\xi| \geq 1$. We will use $\chi$ to give a precise meaning to the oscillatory integral below. We compute 
\begin{align*}
&(2\pi)^n (-\Delta)^s (e^{i\tau\varphi_0} \tilde a)(x) = \int e^{ix \cdot \xi} |\xi|^{2s} (e^{i\tau\varphi_0} \tilde a) \hat{\phantom{a}}(\xi) \,d\xi \\
 &= \lim_{\eps \to 0} \int e^{ix \cdot \xi} |\xi|^{2s} \chi(\eps \xi) (e^{i\tau\varphi_0} \tilde a) \hat{\phantom{a}}(\xi) \,d\xi  \\
 &= \lim_{\eps \to 0} \iint e^{i(x-y) \cdot \xi+ i\tau\varphi_0(y)} |\xi|^{2s} \chi(\eps \xi) \tilde a(y) \,dy \,d\xi \\
 &= \lim_{\eps \to 0} \tau^{2s+n} \iint e^{i\tau[(x-y) \cdot \xi+ \varphi_0(y)]} |\xi|^{2s} \chi(\eps \xi) \tilde a(y) \,dy \,d\xi.
\end{align*}

We wish to use the method of stationary phase to compute the asymptotics of this integral as $\tau \to \infty$. To deal with the minor problem that $|\xi|^{2s}$ is not smooth near $\xi=0$, write $|\xi|^{2s} = \chi(\tau^{\alpha} \xi) |\xi|^{2s} + (1-\chi(\tau^{\alpha} \xi)) |\xi|^{2s}$ where $0 < \alpha < 1$. Observe that $\int e^{i\tau(x-y) \cdot \xi} \chi(\tau^{\alpha} \xi) |\xi|^{2s} \,d\xi = \tau^{-(n+2s)\alpha}\eta(\tau^{1-\alpha}(x-y))$, where $\eta$ is smooth and all of its derivatives are bounded in $\R^n$. Thus 
\[
\iint e^{i\tau[(x-y) \cdot \xi+ \varphi_0(y)]} \chi(\tau^{\alpha} \xi) |\xi|^{2s}  \tilde a(y) \,dy \,d\xi = \tau^{-\alpha(n+2s)}\int e^{i\tau \varphi_0(y)} \eta(\tau^{1-\alpha}(x-y)) \tilde a(y) \,dy.
\]
Since $\nabla \varphi_0$ is nonvanishing near $\mathrm{supp(\tilde a)}$, nonstationary phase \cite[Theorem 7.7.1]{HO:analysis-of-pdos} implies that the last integral is $O(\tau^{-\alpha N})$ for any $N \geq 0$.

We now consider the expression 
\[
\lim_{\tau\to \infty} \tau^{2s+n} \iint e^{i\tau\Phi} (1-\chi(\tau^{\alpha} \xi)) |\xi|^{2s} \chi(\eps \xi) \tilde a(y) \,dy \,d\xi
\]
where $\Phi(y,\xi; x) := (x-y) \cdot \xi + \varphi_0(y)$. In order to deal with the part where $\xi$ is large, we note that there is $R > 0$ such that $|\nabla \varphi_0| \leq R/4$ in $\mathrm{supp}(\tilde a)$. We further write $|\xi|^{2s} = \chi(\xi/R) |\xi|^{2s} + (1-\chi(\xi/R)) |\xi|^{2s}$. Then in $\mathrm{supp}(1-\chi(\xi/R))$ we have $|\xi-\nabla \varphi_0(y)| \geq |\xi|/2 \geq R/4$ and also $|\nabla \Phi| \geq |\xi|/2 \geq R/4$. Applying nonstationary phase \cite[Theorem 7.7.1]{HO:analysis-of-pdos}, we have 
\[
\iint e^{i\tau\Phi} (1-\chi(\xi/R)) |\xi|^{2s} \chi(\eps \xi) \tilde a(y) \,dy \,d\xi = O(\tau^{-N})
\]
for any $N \geq 0$, uniformly over $0 < \eps < 1$.

It remains to consider the expression 
\[
\lim_{\tau\to \infty} \tau^{2s+n} \iint e^{i\tau\Phi} u(y,\xi) \,dy \,d\xi,
\]
where $u(y,\xi) = u_{\tau,\eps}(y,\xi) := \tilde a(y)(1-\chi(\tau^{\alpha} \xi))  |\xi|^{2s} \chi(\xi/R) $. Note that $u \in C^{\infty}_c(\R^{2n})$. We write $x_0$ instead of $x$ and rewrite the integral as 
\[
\iint e^{i\tau((x_0-y) \cdot \xi + \varphi_0(y))} u(y,\xi) \,dy \,d\xi = \iint e^{i\tau(\varphi_0(x+x_0) - x \cdot \xi)} u(x+x_0,\xi) \,dx \,d\xi.
\]
We now apply stationary phase \cite[Theorem 7.7.7]{HO:analysis-of-pdos} with $f(x,x_0) = \varphi_0(x+x_0)$ to obtain that 
\begin{align*}
 &\iint e^{i\tau(f(x,x_0) - x \cdot \xi)} u(x+x_0,\xi) \,dx \,d\xi \\
 &= e^{i\tau \varphi_0(x_0)} \left( \frac{2\pi}{\tau} \right)^n \sum_{\nu=0}^{N} \frac{1}{\nu!}\langle i D_x/\tau, D_{\xi} \rangle^{\nu} (e^{i\tau r_{x_0}(x)} u(x+x_0,\xi))|_{(x,\xi) = (0,\nabla \varphi_0(x_0))} + O(\tau^{-n-N/2})
 \\ & =: 
 e^{i\tau \varphi_0(x_0)} \left( \frac{2\pi}{\tau} \right)^n \sum_{\nu=0}^{N} \frac{S_\nu}{\nu!}+ O(\tau^{-n-N/2})
\end{align*}
where $r_{x_0}(x) := \varphi_0(x_0+x)-\varphi_0(x_0) - \nabla \varphi_0(x_0) \cdot x$.Observe that this implies $r_{x_0}(0) = \nabla r_{x_0}(0) = 0$. \\

\textbf{Step 2.} \emph{(High order terms)} In this step we clarify the dependency on $\tau$ of the higher order terms appearing in $(-\Delta)^su_0$. By letting
\begin{align*}
    S_\nu &  =: i^\nu \tau^{-\nu} \sum_{|\beta|=\nu}    D^\beta_\xi(|\xi|^{2s})|_{\xi = \nabla \varphi_0(y)}S_{\nu,\beta},
\end{align*}
we have that
\begin{align*}
    S_{\nu,\beta} = 
    e^{i\tau(\varphi-\varphi_0)(y)}\sum_{\substack{ \{ t_j \}_{j=0}^{F_M} \subset \N: \\ \sum_{j=0}^{F_M} t_j\leq |\beta|  }} \tau^{\sum_{j=0}^{F_M} t_{j}(1-2sj)} \sum_{\substack{\delta\leq\beta \\ |\delta|\leq |\beta|-\sum_{j=0}^{F_M} t_{j}}}  D^{\delta} a(y) \sum_{\substack{ \{ \eta_j \}_{j=0}^{F_M} \subset \N^n: \\ \sum_{j=0}^{F_M} \eta_j = \beta-\delta, \; |\eta_j|\geq t_j}}   R^{\beta,\delta, \mathcal N}_{\mathcal T}(y),
\end{align*}
where we let $\mathcal N:=\{\eta_j\}_{j=0}^{F_M}$, $\mathcal T:=\{t_j\}_{j=0}^{F_M}$, $\mathcal T':=\{t_j\}_{j=1}^{F_M}$ and
$$ R^{\beta,\delta, \mathcal N}_{\mathcal T}(y) := \binom{\beta}{\delta}\binom{\beta-\delta}{\beta-\delta-\eta_0} I_{\eta_0, t_0, r_y}(0) \prod_{j=1}^{F_M} \binom{\beta-\delta-\sum_{i=0}^{j-1}\eta_i}{\beta-\delta-\sum_{i=0}^j\eta_i }  I_{\eta_{j},t_{j},\varphi_{j}(\cdot+y)}(0).$$
In the above computation we used the fact that $r_y(0)=0$ and
\begin{align*}
    e^{-i\tau^\sigma v(x)}D^{\alpha}_x(e^{i\tau^{\sigma}v(x)}) =  \sum_{ j\leq |\alpha|} i^j\tau^{j\sigma} \sum_{\substack{ \rho_1, ... ,\rho_j \in \N_{>0}^n : \\ \rho_1+...+\rho_j=\alpha}}D^{\rho_1}_xv(x) \,...\, D^{\rho_j}_xv(x) =: \sum_{ j=0}^{|\alpha|} \tau^{j\sigma} I_{\alpha,j,v}(x).
\end{align*}
Thus 
\begin{align*}
    \sum_{\nu=0}^{N} \frac{S_\nu}{\nu!}  =  e^{i\tau(\varphi-\varphi_0)(y)}
    \sum_{\mu=0}^{N}  \sum_{\substack{ \{ t_j \}_{j=1}^{F_M} \subset \N: \\ \sum_{j=1}^{F_M} t_j\leq \mu  }} \tau^{-\mu+\sum_{j=1}^{F_M} t_{j}(1-2sj)} L_{\mu;\mathcal T'}a(y),
\end{align*}
where the differential operator $L_{\mu;\mathcal T'}$ of order  $\mu-\sum_{j=1}^{F_M} t_{j}$ is defined as
\begin{equation}
    \label{def-operators-L}
    L_{\mu;\mathcal T'}f(y) := \sum_{ |\delta|\leq \mu-\sum_{j=1}^{F_M} t_{j}} D^{\delta} f(y) \sum_{t_0=0}^{N-\mu} \frac{i^{\mu+t_0}}{({\mu+t_0})!}\sum_{\substack{\beta\geq\delta \\ |\beta|={\mu+t_0}}}  D^\beta_\xi(|\xi|^{2s})|_{\xi = \nabla \varphi_0(y)}    \sum_{\substack{ \{ \eta_j \}_{j=0}^{F_M} \subset \N^n: \\ \sum_{j=0}^{F_M} \eta_j = \beta-\delta, \\ |\eta_j|\geq t_j}}   R^{\beta,\delta, \mathcal N}_{\mathcal T}(y).
\end{equation}
Therefore we obtain  
\begin{align}\label{eq:before-eikonal}
    (-\Delta)^su_0(x)&=e^{i\tau\varphi(x)}\sum_{\mu=0}^{N}  \sum_{\substack{ \{ t_j \}_{j=1}^{F_M}\subset \N: \\ \sum_j t_j\leq \mu  }} \tau^{2s-\mu+\sum_{j=1}^{F_M} t_{j}(1-2sj)} L_{\mu;\mathcal T'}a(x) + O(\tau^{-n-N/2}).
\end{align}

\textbf{Step 3.} \emph{(The eikonal equation)} If $\sum_{j=1}^{F_M}t_j =\mu$ holds, then there exists a function $\lambda_{\mu;\mathcal T'}$ depending on the phase functions $\{\varphi_j\}_{j\in\N}$ such that $$L_{\mu;\mathcal T'}(y)f(y) = \lambda_{\mu;\mathcal T'}(y)f(y). $$  Using the fact that $I_{\alpha,|\alpha|,r_y}(0) =0$ for all $\alpha\neq 0$, and $I_{0,0,v}(0) =1$ for all $v$, we compute in particular the special cases
$$\lambda_{0;0,...}(y)=  |\nabla \varphi_0(y)|^{2s}$$
and, for all $j\geq 1$,
$$\lambda_{1;\mathcal T_j}(y)=-2s|\nabla \varphi_0(y)|^{2s-2}\nabla \varphi_0(y)   \cdot   \nabla \varphi_j(y),$$
where $\{\mathcal T_{j,i}\}_{i= 1}^{F_M}:=\delta_{i,j}$.
Therefore by assuming that the eikonal equation $|\nabla\varphi_0(x)|=r(x)$ holds we are left with
\begin{equation}\begin{split}\label{after-eikonal}
& e^{-i\tau\varphi(x)} ((-\Delta)^s + q(x) - \tau^{2s} r(x)^{2s}) u_0(x) =
\\ & \qquad  =  
\sum_{\mu=1}^{N}  \sum_{\substack{ \{ t_j \}_{j=1}^{F_M} \subset \N: \\ \sum_j t_j\leq \mu  }} \tau^{2s-\mu+\sum_{j=1}^{F_M} t_{j}(1-2sj)} L_{\mu;\mathcal T'}a(x)  + q(x) a(x) + O(\tau^{-n-N/2}).
\end{split}\end{equation}

\textbf{Step 4.} \emph{(Transport equations, $s\in [1/2,1)$)} Let us consider first $s\neq 1/2$. In this case it suffices to assume $\varphi_j\equiv 0$ for all $j\neq 0$, which immediately implies $L_{\mu;\mathcal T'}\equiv 0$ for all $\mathcal T'\neq 0$. In these assumptions equation \eqref{after-eikonal} becomes
\begin{equation}\begin{split}\label{final-decomposition-large-s}
& e^{-i\tau\varphi(x)} ((-\Delta)^s + q(x) - \tau^{2s} r(x)^{2s}) u_0(x) =
\\ & \qquad  =  
\sum_{l=0}^{A_M} \tau^{2s-1-\alpha_l} L_{1;0}a_l(x)+\sum_{\mu=2}^{N}\sum_{l=0}^{A_M} \tau^{2s-\mu-\alpha_l} L_{\mu;0}a_l(x) \\ & \qquad\quad + \sum_{l=0}^{A_M} \tau^{-\alpha_l}q(x) a_l(x) + O(\tau^{-n-N/2}),
\end{split}\end{equation}
where we used the assumption on the amplitude $a$. Observe that whenever $l,l'\in\N$ are such that \begin{equation}\label{absorption-condition-large-s}
    2s-1-\alpha_l = -\alpha_{l'} \qquad\mbox{ or }\qquad 2s-1-\alpha_l=2s-\mu-\alpha_{l'},
\end{equation} 
 then it must be $l'<l$, since $0< 2s-1$ and $2s-\mu<2s-1$ for all $\mu\in\{2,..,N\}$. In order to ensure that for each $l'\in\N$ there exists a $l\in\N$ verifying either of the conditions in \eqref{absorption-condition-large-s}, we need to verify that
 $$ 2s-1+\mathcal A\subseteq \mathcal A \qquad\mbox{ and }\qquad \mu-1+ \mathcal A\subseteq \mathcal A$$
 for all $\mu\in\{2,...,N\}$, which is indeed the case for $\{a_l\}_{l\in\N}=\N+(2s-1)\N$ as in the statement. Therefore, all the summands in the second and third term on the right-hand side of \eqref{final-decomposition-large-s} (up to indices $A'_M, A''_M$, respectively) can be absorbed by the ones in the first term, because the corresponding powers of $\tau$ coincide. 
By assuming that the coefficients associated to each power of $\tau$ vanish, we obtain a sequence of differential equations in the unknowns $\{a_l\}_{l=0}^{A_M}$. Because of our previous discussion, these are all transport equations for the amplitude functions $a_l$ whose source terms depend only on $\{a_j\}_{j<l}$. We are therefore able to obtain the amplitudes recursively, which leaves us with
\begin{equation}\begin{split}\label{final-s-large}
    e^{-i\tau\varphi(x)} & ((-\Delta)^s + q(x) - \tau^{2s} r(x)^{2s}) u_0(x) = \\ & = 
\sum_{l=A'_M}^{A_M} \sum_{\mu=2}^{N} \tau^{2s-\mu-\alpha_l} L_{\mu;0}a_l(x) + \sum_{l=A''_M}^{A_M} \tau^{-\alpha_l}q(x) a_l(x) + O(\tau^{-n-N/2}) . 
\end{split}\end{equation} 
\vspace{2mm}

Consider now $s=1/2$. In this case the right-hand side of equation \eqref{final-decomposition-large-s} becomes 
$$ \sum_{l=0}^\infty \tau^{-\alpha_l} (L_{1;0}+q(x))a_l(x)+\sum_{\mu=2}^{N}\sum_{l=0}^\infty \tau^{1-\mu-\alpha_l} L_{\mu;0}a_l(x) + O(\tau^{-n-N/2}), $$
and thus we can repeat the same procedure as before using the new first order operator $L_{1;0}+q(x)$ in place of $L_{1;0}$. 
\vspace{2mm}

\textbf{Step 5.} \emph{(Transport equations, $s\in (0,1/2)$)} In this case we write
\begin{equation*}\begin{split}
& e^{-i\tau\varphi(x)} ((-\Delta)^s + q(x) - \tau^{2s} r(x)^{2s}) u_0(x) =
\\ & \qquad  =  
\tau^{2s-1} L_{1;0}a(x) + \sum_{\mu=2}^{N}  \sum_{\substack{ \{ t_j \}_{j=1}^{F_M} \subset \N: \\ \sum_j t_j\leq \mu  }} \tau^{2s-\mu+\sum_j t_{j}(1-2sj)} L_{\mu;\mathcal T'}a(x)
\\ & \qquad\quad +  \sum_{j=2}^{F_M} \tau^{2s(1-j)} L_{1;\mathcal T_j}a(x) +  L_{1;\mathcal T_1}a(x)  + q(x) a(x) + O(\tau^{-n-N/2}).
\end{split}\end{equation*} By solving the first transport equation
\begin{equation}\label{eq:transport-phase} q(x)- 2s|\nabla \varphi_0(x)|^{2s-2}\nabla \varphi_0(x)   \cdot   \nabla \varphi_1(x)= q(x) + \lambda_{1;\mathcal T_1}(x)=0 \end{equation}
for the phase function $\varphi_1$, we are left with
\begin{equation*}\begin{split}
& e^{-i\tau\varphi(x)} ((-\Delta)^s + q(x) - \tau^{2s} r(x)^{2s}) u_0(x) =
\\ & \qquad  =   
\sum_{\mu=2}^{N}  \sum_{\substack{ \{ t_j \}_{j=1}^{F_M} \subset \N: \\ \sum_j t_j< \mu  }} \sum_{l=0}^{A_M} \tau^{2s(1-\sum_j jt_{j})-(\mu-\sum_j t_{j})-\alpha_l} L_{\mu;\mathcal T'}a_l(x)
\\ & \qquad\quad + \left(\sum_{\mu=2}^{N}  \sum_{\substack{ \{ t_j \}_{j=1}^{F_M} \subset \N: \\ \sum_j t_j= \mu  }} \tau^{2s(1-\sum_j jt_{j})} \lambda_{\mu;\mathcal T'}(x) + \sum_{j=2}^{F_M} \tau^{2s(1-j)} \lambda_{1;\mathcal T_j}(x)\right)a(x)
\\ & \qquad\quad + \sum_{l=0}^{A_M} \tau^{2s-1-\alpha_l} L_{1;0}a_l(x)  + O(\tau^{-n-N/2}),
\end{split}\end{equation*}
where all the powers of $\tau$ are negative. The term in parentheses can be rewritten as
\begin{align*}
    S_1+ S_2 :&= \sum_{k=2}^{F_M} \tau^{2s(1-k)} \left(\lambda_{1;\mathcal T_k}(x)+ \sum_{\mu=2}^{N}  \sum_{\substack{ \{ t_j \}_{j=1}^{F_M} \subset \N: \\ \sum_j t_j= \mu, \; \sum_j jt_j = k  }} \lambda_{\mu;\mathcal T'}(x)\right) \\ & \quad + \sum_{\mu=2}^{N}  \sum_{\substack{ \{ t_j \}_{j=1}^{F_M} \subset \N: \\ \sum_j t_j= \mu, \; \sum_j jt_j > F_M  }} \tau^{2s(1-\sum_j jt_{j})}  \lambda_{\mu;\mathcal T'}(x),
\end{align*}
and we see that $S_1$ vanishes as soon as
\begin{equation}\label{recursion-phase}
    2s|\nabla \varphi_0(x)|^{2s-2}\nabla \varphi_0(x)   \cdot   \nabla \varphi_k(x) = \sum_{\mu=2}^{N}  \sum_{\substack{ \{ t_j \}_{j=1}^{F_M} \subset \N: \\ \sum_j t_j= \mu, \; \sum_j jt_j = k  }} \lambda_{\mu;\mathcal T'}(x)
\end{equation}
holds for all $k\in\{2,..., F_M\}$. This is a sequence of transport equations for the phase functions $\varphi_k$, where the right-hand sides only depend on the functions $\{\varphi_j\}_{j<k}$. This follows from the fact that, since $\mu\geq 2$, necessarily the largest $j$ for which $t_j$ does not vanish must be strictly smaller than $k$. Having already determined both $\varphi_0$ and $\varphi_1$, equation \eqref{recursion-phase} allows us to compute all the remaining phase functions recursively. Eventually, we are left with
\begin{equation}\begin{split}\label{final-decomposition}
& e^{-ik\varphi(x)} ((-\Delta)^s + q(x) - \tau^{2s} r(x)^{2s}) u_0(x) =
\\ & \qquad  =   
\sum_{\mu=2}^{N}  \sum_{\substack{ \{ t_j \}_{j=1}^{F_M} \subset \N: \\ \sum_j t_j< \mu  }} \sum_{l=0}^{A_M} \tau^{2s(1-\sum_j t_{j}j)-(\mu-\sum_j t_{j})-\alpha_l} L_{\mu;\mathcal T'}a_l(x)
\\ & \qquad\quad + \sum_{l=0}^{A_M} \tau^{2s-1-\alpha_l} L_{1;0}a_l(x)  + S_2 a+ O(\tau^{-n-N/2}).
\end{split}\end{equation}
Observe that we have $2s(1-\sum_{j=1}^{F_M} t_{j}j)-(\mu-\sum_{j=1}^{F_M} t_{j})< 2s-1$ for all choices of $\mu, \mathcal T'$ as in the first term on the right-hand side. This implies that if $l,l'\in\N$ are such that
\begin{equation}\label{absorption-condition}
    2s(1-\sum_{j=1}^{F_M} t_{j}j)-(\mu-\sum_{j=1}^{F_M} t_{j})-\alpha_l = 2s-1-\alpha_{l'},
\end{equation} 
then necessarily $l<l'$. The fact that for all $l\in \N$ there exists a $l'\in\N$ such that \eqref{absorption-condition} holds follows from $$2s\sum_{j=1}^{F_M} t_{j}j+(\mu-\sum_{j=1}^{F_M} t_{j}-1)  +\mathcal A \subseteq \mathcal A,$$
which holds for $\{\alpha_j: j\in\N\}=\N+2s\N$. Therefore, all the summands in the first term on the right-hand side of \eqref{final-decomposition} (up to a certain index $A'_M$) can be absorbed by the ones of the second term, and by comparing the powers of $\tau$ we obtain a sequence of transport equations for the amplitude functions $a_l$, where the source terms depend only on $\{a_j\}_{j<l}$. This allows us to compute all the amplitude functions recursively, which leaves us with
\begin{equation}\begin{split}\label{final-s-small}
& e^{-i\tau\varphi(x)} ((-\Delta)^s + q(x) - \tau^{2s} r(x)^{2s}) u_0(x) =
\\ & \qquad  =   
\sum_{\mu=2}^{N}  \sum_{\substack{ \{ t_j \}_{j=1}^{F_M} \subset \N: \\ \sum_j t_j< \mu  }} \sum_{l=A'_M}^{A_M} \tau^{2s(1-\sum_j t_{j}j)-(\mu-\sum_j t_{j})-\alpha_l} L_{\mu;\mathcal T'}a_l(x)
\\ & \qquad\quad  + \sum_{\mu=2}^{N}  \sum_{\substack{ \{ t_j \}_{j=1}^{F_M} \subset \N: \\ \sum_j t_j= \mu, \; \sum_j jt_j > F_M  }} \tau^{2s(1-\sum_j jt_{j})}  \lambda_{\mu;\mathcal T'}(x)a(x) + O(\tau^{-n-N/2}).
\end{split}\end{equation}

\textbf{Step 6.} \emph{(Final estimates)} We will now use equations \eqref{final-s-large} and \eqref{final-s-small} to show that there exists $C_{M,\beta} > 0$ such that
$$\|((-\Delta)^s-\tau^{2s}r^{2s}+q)u_0\|_{H^\beta_{scl}(\Omega)} \leq C_{M,\beta} \tau^{-M}.$$
If $s\geq 1/2$, then 
\begin{align*}
    \|((-\Delta)^s&-\tau^{2s}r^{2s}+q)u_0\|_{H^\beta_{scl}(\Omega)} \leq \\ &  \leq c_N \tau^{-n-N/2} +  \tau^{2s-2-\alpha_{A'_M}}\| L_{2;0}a_{A'_M} \|_{H^\beta_{scl}(\Omega)} + \tau^{-\alpha_{A''_M}}\| qa_{A''_M} \|_{H^\beta_{scl}(\Omega)} .
\end{align*}
The wanted result is obtained by choosing $N= 2M$ and $A'_M, A''_M$ so large that $M\leq \min\{\alpha_{A'_M}, \alpha_{A''_M} \}$, which is possible by choosing $A_M$ large enough. Similarly, if $s<1/2$ we have
\begin{align*}
    \|((-\Delta)^s&-\tau^{2s}r^{2s}+q)u_0\|_{H^\beta_{scl}(\Omega)} \leq \\ & \leq  c_N \tau^{-n-N/2} +  \tau^{2s(1-F_M)}\| L_{\mu;\mathcal T'}a \|_{H^\beta_{scl}(\Omega)} + \tau^{2s-\alpha_{A'_M}}\| L_{\mu;\mathcal T'}a_{A'_M} \|_{H^\beta_{scl}(\Omega)} ,
\end{align*}
which implies the result for $N= 2M$ and $A_M,F_M$ large enough. Using the relation
$$ \alpha_{K_sh}\geq h \quad \mbox{for all } h\in\N, \qquad K_s:=\begin{cases}
    (2s-\floor{2s})^{-1}, & s\in (0,\frac12)\cup(\frac12,1) \\ 1, & s=\frac12
\end{cases}, $$
it is easy to see that choosing $A_M, F_M = \ceil{3K_s}M$ suffices in both regimes. Finally, the constant $C_{M,\beta}$ is obtained by observing that all the $L_{\mu;\mathcal T'}$ are local differential operators, and all the amplitudes $a_l$ belong to $C^\infty(\overline\Omega)$ by construction, which implies that all the quantities of the kind $\| L_{\mu;\mathcal T'}a_{l} \|_{H^\beta_{scl}(\Omega)}$ are bounded.
\end{proof}

\begin{Rem}\label{rem:why-small-s-hard}
    Observe that the geometrical optics solutions constructed in Proposition \ref{prop:approx-sol-rq} for the two regimes $s\in (0,\frac 12)$ and $s\in[\frac 12,1)$ differ substantially. While in the former case there are many phase function associated to both positive and negative powers of $\tau$, in the latter case we have a unique phase function associated to $\tau$. This difference is due to the computations in Step 4: if we were to assume a unique phase function also in the regime $s\in (0,\frac 12)$, which would lead to equation \eqref{final-decomposition-large-s}, then because $2s-1<0$ we would obtain $l<l'$ when equation (\ref{absorption-condition-large-s}.1) holds. This in turn would imply that in the transport equation for $a_l$ there would appear terms depending on $a_j$ for $j>l$, which would impede the recursion argument. Thus we have shown that it is impossible to get geometrical optics with a single phase function in the regime $s\in(0,\frac 12)$. This fact will have important consequences in Section \ref{sec:applications}, where we study applications to inverse problems.
\end{Rem}

We complement the results of Proposition \ref{prop:approx-sol-rq} with the following Corollary:

\begin{Cor}
    Making use of the fractional Liouville reduction formula
$$ \mathbf C^s_\gamma u = \gamma^{1/2}\left((-\Delta)^s(\gamma^{1/2}u) - u (-\Delta)^s\gamma^{1/2}\right) $$
(see \cite{C20}), we can compute
\begin{align*}
    \mathbf C^s_\gamma u - \tau^{2s}u =
    \gamma^{1/2}\left((-\Delta)^s - \tau^{2s} \gamma^{-1} -\gamma^{-1/2}(-\Delta)^s\gamma^{1/2} \right)(\gamma^{1/2}u) ,
\end{align*}
which we recognize to be an operator of the kind considered in the above Proposition, with $r(x):= \gamma^{-1/2s}(x)$ and $q(x):= -\gamma^{-1/2}(x)(-\Delta)^s\gamma^{1/2}(x)$. In particular, we deduce that it is possible to find an approximate solution of the form $u(x)= e^{i\tau\varphi(x)}\gamma^{-1/2}(x)a(x)$, with $\varphi(x)$ and $a(x)$ determined by the eikonal and transport equations as above.
\end{Cor}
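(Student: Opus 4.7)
The plan is to carry out a conjugation argument that reduces the fractional conductivity equation with a large spectral parameter to the fractional Helmholtz equation \eqref{rq-equation}, and then invoke Proposition \ref{prop:approx-sol-rq} directly. First I would verify the algebraic identity already displayed in the statement: starting from the Liouville reduction formula $\mathbf C^s_\gamma u = \gamma^{1/2}((-\Delta)^s(\gamma^{1/2}u) - u(-\Delta)^s\gamma^{1/2})$ given in \cite{C20}, I would rewrite the zero-th order terms as $\tau^{2s} u = \gamma^{1/2}\cdot \tau^{2s}\gamma^{-1}\cdot(\gamma^{1/2}u)$ and $u(-\Delta)^s\gamma^{1/2} = \gamma^{1/2}\cdot(\gamma^{-1/2}(-\Delta)^s\gamma^{1/2})\cdot(\gamma^{1/2}u)$, so that both factors of $\gamma^{1/2}u$ can be collected into a single operator acting on $v := \gamma^{1/2}u$, namely
\[
\mathbf C^s_\gamma u - \tau^{2s}u = \gamma^{1/2}\bigl((-\Delta)^s - \tau^{2s}\gamma^{-1} -\gamma^{-1/2}(-\Delta)^s\gamma^{1/2}\bigr)v.
\]

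Next I would read off the refraction index and potential. Setting
\[
r(x) := \gamma(x)^{-1/(2s)}, \qquad q(x) := -\gamma^{-1/2}(x)(-\Delta)^s\gamma^{1/2}(x),
\]
we have $\tau^{2s}\gamma^{-1} = \tau^{2s}r^{2s}$, and both $r,q$ are smooth with $r>0$ provided $\gamma \in C^{\infty}(\R^n)$ is positive, which fits the hypotheses of Proposition \ref{prop:approx-sol-rq}. Moreover, since $\gamma^{1/2}$ is bounded away from zero on $\overline\Omega$, dividing through by $\gamma^{1/2}$ shows that $\mathbf C^s_\gamma u - \tau^{2s}u = 0$ in $\Omega$ if and only if
\[
\bigl((-\Delta)^s - \tau^{2s}r(x)^{2s} + q(x)\bigr)v = 0 \quad \text{in } \Omega,
\]
which is precisely equation \eqref{rq-equation}.

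Finally, I would apply Proposition \ref{prop:approx-sol-rq} to obtain an approximate geometrical optics solution $v(x) = e^{i\tau\varphi(x)}a(x)$, with $\varphi$ and $a$ determined by the eikonal and transport equations associated to the above $r$ and $q$. Back-substituting $u = \gamma^{-1/2}v$ then yields the claimed form $u(x) = e^{i\tau\varphi(x)}\gamma^{-1/2}(x)a(x)$, together with the remainder estimate inherited from Proposition \ref{prop:approx-sol-rq} (since multiplication by the smooth positive factor $\gamma^{-1/2}$ is bounded on $H^\beta_{scl}(\Omega)$). No serious obstacle is anticipated: the entire argument is a conjugation by $\gamma^{1/2}$ combined with the already-established construction, and the only regularity issue, namely that $q = -\gamma^{-1/2}(-\Delta)^s\gamma^{1/2}$ is smooth, follows from the smoothness of $\gamma^{1/2}$ and standard mapping properties of $(-\Delta)^s$ on $C^\infty$.
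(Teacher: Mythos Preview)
Your proposal is correct and follows exactly the approach implicit in the paper: the Corollary is stated there without a separate proof, since the argument is entirely contained in its statement (Liouville reduction, identification of $r$ and $q$, application of Proposition \ref{prop:approx-sol-rq}, back-substitution via $u=\gamma^{-1/2}v$). Your write-up simply spells out those steps, with the only addition being the remark on the $H^\beta_{scl}$ boundedness of multiplication by $\gamma^{-1/2}$ and the smoothness of $q$, both of which are routine.
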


\section{Upgrading approximate solutions to exact solutions}\label{sec:exact}

In this section we show how to construct exact solutions to the problem $Lu=0$ in $\Omega$ starting from approximate geometrical optics solutions $u_M$ verifying the property
$$\| Lu_M \|_{L^2(\Omega)}\leq C_M\tau^{-M},$$
where $L:=(-\Delta)^{s}-\tau^{2s}r^{2s}+q$. In order to produce an exact solution $u = u_M + v_M$, it is enough to solve the equation $Lv_M = -Lu_M$ with suitable estimates. To state the solvability result, we will consider the small parameter $h := \tau^{-1}$. The semiclassical Sobolev norm is defined by 
\[
\norm{u}_{H^{\alpha}_{scl}(\R^n)} := \norm{\langle hD \rangle^{\alpha} u}_{L^2(\R^n)},
\]
where $\langle \xi \rangle = (1+|\xi|^2)^{1/2}$ and $D = -i\nabla$. The $H^{\alpha}_{scl}(\Omega)$ norm is defined by restriction.

The solvability results are as follows.  In order to have good high frequency estimates for all large $\tau$ in the case of the operator $(-\Delta)^{s}-\tau^{2s}r^{2s}+q$, we need to assume that the refraction index $r(x)$ is \emph{nontrapping} in $\overline{\Omega}$, i.e.\ that any geodesic of the Riemannian metric $g_{jk}(x) = r(x)^{2} \delta_{jk}$ reaches $\partial \Omega$ in finite time in both directions.

\begin{Pro}\label{first-solvability-proposition}
Let $\Omega \subset \R^n$ be open and bounded, $\alpha\in\R$ and $ s\in(0, 1)$. Let $L:=(-\Delta)^{s}-\tau^{2s}r^{2s}+q$, with $r,q \in C^{\infty}(\overline{\Omega})$, $r$ positive and nontrapping. There are $C, h_0 > 0$ such that for any $f \in H^{\alpha}(\Omega)$ the equation 
\[
Lu = f \text{ in $\Omega$}
\]
has a solution $u \in H^{2s-1+\alpha}(\Omega)$ satisfying 
\[
\norm{u}_{H^{2s-1+\alpha}_{scl}(\Omega)} \leq C h^{2s-1} \norm{f}_{H^{\alpha}_{scl}(\Omega)}
\]
whenever $0 < h < h_0$.
\end{Pro}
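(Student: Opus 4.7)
The plan is to view $L$ as a semiclassical operator after rescaling. Setting $h := 1/\tau$ and $P := h^{2s} L$, the equation $Lu = f$ becomes $Pu = h^{2s} f$. Away from $\xi = 0$, the semiclassical principal symbol of $P$ is
\[
p(x,\xi) = |\xi|^{2s} - r(x)^{2s},
\]
with the potential $q$ contributing a subprincipal term $h^{2s} q$. The characteristic set $\Sigma = \{p = 0\} = \{|\xi| = r(x)\}$ coincides with that of the classical Helmholtz operator, and a direct computation shows that $H_p$ restricted to $\Sigma$ is a smooth positive reparametrization of the geodesic flow of $g_{jk} = r^2 \delta_{jk}$. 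Hence the nontrapping hypothesis on $r$ transfers to $p$, and bicharacteristics of $p$ exit any compact set in finite time.

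The first technical point is the nonsmoothness of $|\xi|^{2s}$ at the origin, which obstructs a direct application of standard semiclassical $\Psi$DO theory. I would fix a cutoff $\chi \in C^\infty_c(\R^n)$ equal to $1$ near $0$ and split
\[
(-\Delta)^s = \chi(D)(-\Delta)^s + (1-\chi(D))(-\Delta)^s.
\]
The second summand has a smooth Fourier multiplier symbol and, after semiclassical rescaling, is a standard semiclassical $\Psi$DO of order $2s$ whose symbol is of real principal type on $\Sigma$; the first summand maps $H^{-N}(\R^n)$ to $H^N(\R^n)$ for every $N$ and contributes a smoothing perturbation that can be absorbed. I would then extend $r$ and $q$ smoothly to $\R^n$ with $r \equiv 1$ and $q \equiv 0$ outside a ball $B \Supset \overline\Omega$, preserving nontrapping (which only depends on the flow in a neighbourhood of $\overline\Omega$).

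With these reductions in place, the main step is to construct the outgoing resolvent for $P$ via a limiting absorption principle: for $\epsilon > 0$ the operator $P - i\epsilon$ is invertible, and a uniform-in-$\epsilon$ semiclassical resolvent estimate lets one pass to the limit $\epsilon \to 0^+$. The estimate combines elliptic regularity (gaining $2s$ derivatives away from $\Sigma$) with real-principal-type semiclassical propagation of singularities along $H_p$-bicharacteristics (losing one derivative), in the spirit of \cite[Section 12.3]{Zworski_Semiclassical_Analysis}, together with the nontrapping hypothesis ensuring that bicharacteristics reach the region $\{r \equiv 1\}$ where the free outgoing fractional resolvent provides control. The outcome is
\[
\|u\|_{H^{2s-1}_{scl}(\R^n)} \leq C h^{-1}\|Pu\|_{L^2(\R^n)},
\]
which translates via $Pu = h^{2s} f$ into the proposition for $\alpha = 0$.

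Finally, the extension to general $\alpha \in \R$ follows by conjugating with $\langle hD\rangle^\alpha$: setting $v := \langle hD\rangle^\alpha u$, the semiclassical commutator $[P, \langle hD\rangle^\alpha]$ gains a factor of $h$ relative to $\langle hD\rangle^\alpha P$, so $v$ satisfies an equation of the same type with right-hand side $\langle hD\rangle^\alpha(h^{2s} f) \in L^2$, up to a perturbation absorbed into the left-hand side for $h$ small. The main technical obstacle throughout is the construction of the outgoing resolvent in the presence of the nonsmooth symbol of $(-\Delta)^s$ at $\xi=0$; the frequency cutoff above is precisely the tool that reduces the problem to the smooth semiclassical setting in which the classical nontrapping resolvent theory applies.
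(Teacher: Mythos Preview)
Your overall semiclassical setup (the rescaling $P=h^{2s}L$, the principal symbol $p=|\xi|^{2s}-r^{2s}$, the frequency cutoff near $\xi=0$, and the conjugation by $\langle hD\rangle^{\alpha}$ for general $\alpha$) is correct and matches the paper. However, the route you take to solvability is genuinely different from the paper's, and it carries a real gap.

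The paper does \emph{not} construct a global outgoing resolvent or invoke any limiting absorption principle. Instead it proves an a priori estimate
\[
\|u\|_{H^{2s-1+\alpha}_{scl}}\le Ch^{-1}\|h^{2s}Lu\|_{H^{\alpha}_{scl}}\qquad\text{for }u\in C^\infty_c(\Omega),
\]
directly in the bounded domain, and then obtains solvability by a standard Hahn--Banach/Riesz duality argument: the a priori estimate makes $Lv\mapsto\langle f,v\rangle$ a bounded functional on $L(C^\infty_c(\Omega))\subset H^{1-2s-\alpha}_{scl}$, which one extends and represents by some $u$. No behaviour at infinity, no radiation condition, and no global resolvent are needed. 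The a priori estimate itself is obtained by splitting into an elliptic low-frequency piece $B_0=\chi(hD)$ (handled by freezing coefficients, since the symbol is nonsmooth) and a high-frequency piece $B=I-B_0$ (handled by a positive commutator/escape function argument using nontrapping, as in \cite{MSS22}).

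Your plan instead hinges on a ``free outgoing fractional resolvent'' and a limiting absorption principle for $(-\Delta)^s-\tau^{2s}r^{2s}$. This is precisely the step that is \emph{not} available off the shelf: the outgoing condition and the LAP for the fractional Helmholtz operator with variable index are not standard, and you yourself flag this as the main technical obstacle without resolving it. Moreover, your inequality $\|u\|_{H^{2s-1}_{scl}(\R^n)}\le Ch^{-1}\|Pu\|_{L^2(\R^n)}$ cannot hold for arbitrary $u$ on $\R^n$ without such a radiation condition; it only holds for compactly supported $u$ (which is exactly what the paper proves and then dualizes). The missing idea is therefore the duality step: once you have the a priori estimate for $u\in C^\infty_c(\Omega)$, Hahn--Banach gives solvability in $\Omega$ immediately, and the entire global-resolvent detour (extension of $r,q$ to $\R^n$, LAP, outgoing fractional resolvent) becomes unnecessary.
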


The above result will follow by duality from the next a priori estimate.

\begin{Pro} \label{prop_apriori}
Let $\Omega \subset \R^n$ be open and bounded, $\alpha\in\R$ and $ s\in(0, 1)$. Let $L:=(-\Delta)^{s}-\tau^{2s}r^{2s}+q$, with $r,q \in C^{\infty}(\overline{\Omega})$, $r$ positive and nontrapping. There are $C, h_0 > 0$ such that the estimate 
\[
\norm{u}_{H^{2s-1+\alpha}_{scl}} \leq C h^{-1} \norm{h^{2s}Lu}_{H^{\alpha}_{scl}}
\]
holds whenever $u \in C^{\infty}_c(\Omega)$ and $0 < h < h_0$.
\end{Pro}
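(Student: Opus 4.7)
The plan is to recast the bound as a semiclassical nontrapping real-principal-type estimate for $P := h^{2s} L$, whose principal symbol on $T^*\R^n\setminus\{\xi=0\}$ is $p(x,\xi)=|\xi|^{2s}-r(x)^{2s}$. The characteristic set $\Sigma=\{|\xi|=r(x)\}$ is a smooth hypersurface bounded away from $\{\xi=0\}$, and on $\Sigma$ a direct computation gives
\[
H_p = 2s\, r^{2s-2}\!\left(\xi\cdot\nabla_x + r\,\nabla r\cdot\nabla_\xi\right),
\]
a positive reparametrization of the geodesic Hamilton vector field of $g_{jk}=r^2\delta_{jk}$. The nontrapping hypothesis therefore guarantees that every null bicharacteristic over $\overline\Omega$ leaves $T^*\overline\Omega$ in uniformly bounded time, which is the input for a standard escape-function argument. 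The $h^{-1}$ loss and the one-derivative loss are the generic signature of such real-principal-type estimates.

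I would then perform a microlocal decomposition using symbol cutoffs $\chi_{\mathrm{el}},\chi_{\mathrm{ch}}\in S^0$ with $\chi_{\mathrm{el}}+\chi_{\mathrm{ch}}\equiv 1$, where $\chi_{\mathrm{ch}}$ is supported in a compact neighborhood of $\Sigma$ disjoint from $\{\xi=0\}$, while $\chi_{\mathrm{el}}$ contains both $\{\xi=0\}$ and $\{|\xi|\geq 2\|r\|_\infty\}$. On $\mathrm{supp}(\chi_{\mathrm{ch}})$ the symbol $|\xi|^{2s}$ is smooth, so $P$ acts as a standard semiclassical pseudodifferential operator there, sidestepping the nonsmoothness at the origin; on $\mathrm{supp}(\chi_{\mathrm{el}})$ the symbol is elliptic (one has $|p|\gtrsim\langle\xi\rangle^{2s}$ for large $\xi$ and $|p|\gtrsim 1$ near $\{\xi=0\}$ or in the bounded gap), so the semiclassical elliptic parametrix yields
\[
\|\mathrm{Op}_h(\chi_{\mathrm{el}})u\|_{H^{2s+\alpha}_{scl}} \leq C\|Pu\|_{H^\alpha_{scl}} + O(h^\infty)\|u\|_{H^{-N}_{scl}},
\]
which already controls this piece in $H^{2s-1+\alpha}_{scl}$ without any $h^{-1}$ loss.

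For the piece $\mathrm{Op}_h(\chi_{\mathrm{ch}})u$ I would run a positive commutator argument with an escape function $G\in C^\infty_c(T^*\R^n)$ supported away from $\{\xi=0\}$ and satisfying $H_p G\geq c>0$ on an open neighborhood of $\Sigma\cap T^*\overline\Omega$, obtained as a mollification of minus the exit time of the bicharacteristic flow from a slightly enlarged set. Setting $A=\mathrm{Op}_h(G)$ (self-adjoint up to $O(h)$), and using the self-adjointness of $P$, the expansion $[P,A]=-ih\,\mathrm{Op}_h(H_pG)+O_{L^2\to L^2}(h^2)$, and sharp Gårding, one derives
\[
ch\|\mathrm{Op}_h(\chi_{\mathrm{ch}})u\|_{L^2}^2 \leq 2|\langle Au,Pu\rangle| + O(h^2)\|u\|_{L^2}^2 \leq C\|u\|_{L^2}\|Pu\|_{L^2}+O(h^2)\|u\|_{L^2}^2.
\]
Since $|\xi|$ is bounded on $\mathrm{supp}(\chi_{\mathrm{ch}})$, one has $\|\cdot\|_{H^{2s-1+\alpha}_{scl}}\sim\|\cdot\|_{L^2}$ on that piece; combining with the elliptic estimate, using Cauchy--Schwarz with $\varepsilon$ to isolate $\|\mathrm{Op}_h(\chi_{\mathrm{ch}}) u\|$, and absorbing the $O(h^2)\|u\|^2$ terms for $h$ small gives the bound at $\alpha=0$. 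The general case follows by conjugating with $\langle hD\rangle^\alpha$, whose commutator with $P$ is of strictly lower order and produces a negligible perturbation. The two foreseeable obstacles are the nonsmoothness of $|\xi|^{2s}$ at the origin, which is handled by absorbing $\{\xi=0\}$ into the elliptic region (since $p\to -r^{2s}<0$ there), and checking that the escape function $G$ can be chosen so that $H_pG$ is positive on an open neighborhood of $\Sigma\cap T^*\overline\Omega$ rather than merely on $\Sigma$ itself—a standard regularization for nontrapping Hamiltonian flows.
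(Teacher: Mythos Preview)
Your overall architecture---microlocal split into an elliptic region and a characteristic region, elliptic estimate on the first, positive commutator with an escape function on the second, then conjugation by $\langle hD\rangle^\alpha$ for general $\alpha$---is exactly the paper's strategy. The escape-function step and the reduction to $\alpha=0$ are fine as sketched.

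The gap is in the elliptic piece near $\xi=0$. You write that the estimate there follows from ``the semiclassical elliptic parametrix,'' and later that the nonsmoothness of $|\xi|^{2s}$ ``is handled by absorbing $\{\xi=0\}$ into the elliptic region.'' Ellipticity of $p$ near $\xi=0$ is necessary but not sufficient: the standard parametrix construction is a symbolic expansion that requires the full symbol to lie in a H\"ormander class, and $|\xi|^{2s}$ does not for $s<1$. So invoking the usual elliptic parametrix is not justified, and merely locating $\{\xi=0\}$ inside the elliptic region does not make the calculus available. This is precisely the point the paper flags (``one cannot use $C^\infty$ pseudodifferential calculus''), and it is where the real work is.

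The paper's fix is a freezing-coefficients argument: for the frozen operator $P_{x_0}$ with symbol $|\xi|^{2s}-r(x_0)^{2s}$, the low-frequency estimate $\|B_0 u\|_{H^{2s-1}_{scl}}\lesssim \|P_{x_0}u\|_{L^2}$ is immediate because $P_{x_0}$ is a Fourier multiplier whose symbol is bounded away from zero on $\mathrm{supp}\,\chi$; one then localizes with a fine partition of unity $\{\chi_j\}$ so that $|r(x)^{2s}-r(x_j)^{2s}|\leq\delta$ on $\mathrm{supp}\,\chi_j$, sums, and absorbs the $\delta$- and $h$-errors. A closely related issue resurfaces in your commutator step: the expansion $[P,A]=-ih\,\mathrm{Op}_h(H_pG)+O(h^2)$ uses the full $P$, not its smooth truncation, so you also need a device (in the paper, an auxiliary cutoff $\tilde B$ with $\tilde B_0 B=0$) to ensure that only the smooth part of $P$ enters the commutator up to controllable errors. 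Once you supply these two ingredients, your outline coincides with the paper's proof.
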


The estimate in Proposition \ref{prop_apriori} is a high frequency resolvent type estimate, but it is stated in a bounded domain and therefore one does not need to pay attention to behavior at infinity. We will prove the estimate by a positive commutator argument as in \cite{MSS22}, which considered the case $s=1$. The main difference in our case is that the symbol of the fractional Laplacian is not smooth at $\xi=0$, so one cannot use $C^{\infty}$ pseudodifferential calculus. We will prove this case by a direct argument based on freezing coefficients. \\

We let $P:= (-h^2 \Delta)^s - r^{2s} + h^{2s}q$ with semiclassical principal symbol $$p(x,\xi) := |\xi|^{2s}-r(x)^{2s},$$ and we define $\tilde P$ as the operator generated by $p(x,\xi)$. Observe that $\|(P-\tilde P)u\|\lesssim h^{2s}\|u\|$ for all $u\in C^\infty_c(\Omega)$.  Choose $\chi \in C^{\infty}_c(\R^n)$ with $\chi = 1$ when $|\xi| \leq \frac{1}{4} c_0$ and $\chi = 0$ when $|\xi| \geq \frac{1}{2} c_0$, where the constant $c_0>0$ is such that $r(x)\geq c_0$ for all $x\in\mathbb R^n$. We write 
\[
B_0 u := \chi(h D) u, \qquad Bu := (I - B_0)u.
\]
Note that $B_0 u$ is supported in phase space away from $p^{-1}(0)$. \\

The proof of Proposition \ref{prop_apriori} is based on the following two lemmas, which consider the case $\alpha=1$. Below we will write $A \lesssim B$ if $A \leq CB$ where $C$ is independent of $h \in (0,h_0)$ and of $u \in C^{\infty}_c(\Omega)$. The first lemma is an estimate in the elliptic region, and comes with a power of $h$ that is better than in the main estimate.

\begin{Lem} \label{lemma_bzerou}
For any $\eps > 0$ there is $h_0 > 0$ such that 
\[
\norm{B_0 u}_{H^{2s-1}_{scl}} \lesssim \norm{Pu}_{L^2} + \eps \norm{u}_{H^{2s-1}_{scl}}
\]
when $0 < h < h_0$.
\end{Lem}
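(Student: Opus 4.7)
The plan rests on ellipticity of $P$ in the frequency region where $B_0$ is supported. Since $r(x)\geq c_0$ and $\abs{\xi}\leq c_0/2$ on $\mathrm{supp}\,\chi$, the semiclassical symbol $p(x,\xi) = \abs{\xi}^{2s}-r(x)^{2s}$ satisfies $\abs{p(x,\xi)}\geq \mu := c_0^{2s}(1-2^{-2s}) > 0$. I first reduce to an $L^2$ estimate on $B_0 u$: since the multiplier $\chi(h\xi)\langle h\xi\rangle^{2s-1}$ is uniformly bounded on $\R^n$ (by $\max\{\langle c_0/2\rangle^{2s-1},1\}$), we have $\norm{B_0 u}_{H^{2s-1}_{scl}}\lesssim \norm{B_0 u}_{L^2}$, so it suffices to control $\norm{B_0 u}_{L^2}$.

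The key identity is $PB_0 u = B_0 Pu + [P, B_0]u$. Crucially, $[(-h^2\Delta)^s, \chi(hD)] = 0$ because both operators are Fourier multipliers, so the nonsmoothness of $\abs{\xi}^{2s}$ at the origin plays no role in this commutator. Hence $[P, B_0] = -[r^{2s}, \chi(hD)] + h^{2s}[q, \chi(hD)]$, each term involving only smooth multiplication and a smooth cutoff Fourier multiplier. For the ellipticity lower bound on $\norm{PB_0 u}_{L^2}$, I would apply the triangle inequality to $PB_0 u = (-h^2\Delta)^s B_0 u - r^{2s}B_0 u + h^{2s}qB_0 u$, using $\norm{r^{2s}v}_{L^2}\geq c_0^{2s}\norm{v}_{L^2}$ and the Fourier multiplier bound $\norm{(-h^2\Delta)^s B_0 u}_{L^2} \leq (c_0/2)^{2s}\norm{B_0 u}_{L^2}$. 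This yields $\norm{PB_0 u}_{L^2}\geq (\mu/2)\norm{B_0 u}_{L^2}$ for $h$ small enough to absorb the $h^{2s}q$ term.

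The central technical point is the commutator bound $\norm{[r^{2s}, \chi(hD)]u}_{L^2}\lesssim h\norm{u}_{H^{2s-1}_{scl}}$. A direct kernel calculation using $\abs{r^{2s}(x)-r^{2s}(y)}\lesssim \abs{x-y}$ and rapid decay of $\mathcal F^{-1}\chi$ gives $\norm{[r^{2s}, \chi(hD)]u}_{L^2}\lesssim h\norm{u}_{L^2}$, which suffices for $s\geq 1/2$ since then $\norm{u}_{L^2}\leq\norm{u}_{H^{2s-1}_{scl}}$. For $s<1/2$, where the semiclassical $H^{2s-1}_{scl}$ norm is strictly weaker than $L^2$, I would sharpen the estimate by factoring
\[
[r^{2s}, \chi(hD)]\langle hD\rangle^{1-2s} = [r^{2s}, \chi(hD)\langle hD\rangle^{1-2s}] - \chi(hD)[r^{2s}, \langle hD\rangle^{1-2s}],
\]
and applying smooth semiclassical pseudodifferential calculus to each piece: $\chi(hD)\langle hD\rangle^{1-2s}$ has a smooth, compactly supported symbol $\chi(\xi)\langle\xi\rangle^{1-2s}$, so the first commutator is $O(h)$ in $L^2\to L^2$; for the second, $[r^{2s}, \langle hD\rangle^{1-2s}]$ has leading semiclassical Weyl symbol $-ih\{r^{2s},\langle\xi\rangle^{1-2s}\}$ of order $-2s$, which recovers compact $\xi$-support upon composition with $\chi(hD)$ and again gives $O(h)$ in $L^2$. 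Combined with the ellipticity lower bound and the initial reduction, this yields $\norm{B_0 u}_{H^{2s-1}_{scl}}\lesssim \norm{Pu}_{L^2} + Ch\norm{u}_{H^{2s-1}_{scl}}$ with $C$ independent of $h$, and choosing $h_0$ small enough completes the proof. I expect the main obstacle to be verifying the sharpened commutator estimate in the regime $s<1/2$, since there the naive $L^2\to L^2$ bound is not strong enough and one must genuinely exploit the smoothness of $\langle\xi\rangle^{1-2s}$ in the above factorisation.
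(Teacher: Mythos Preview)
Your proof is correct and takes a genuinely different route from the paper. The paper proceeds by \emph{freezing coefficients}: it first proves the frozen estimate $\norm{B_0 u}_{H^{2s-1}_{scl}} \lesssim \norm{P_{x_0} u}_{L^2}$ via a direct Fourier multiplier bound, then covers $\overline\Omega$ by small balls on which $r^{2s}$ varies by at most $\delta$, localizes with a partition of unity $\{\chi_j\}$ satisfying $\sum \chi_j^2 = 1$, and sums the resulting local estimates after handling the commutators $[\tilde P, \chi_j]$ and $[\chi_j, B_0]$. The small parameter $\delta$ is chosen first, then $h_0$.

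Your argument bypasses the localization entirely by exploiting the simple but decisive observation that $[(-h^2\Delta)^s, \chi(hD)] = 0$, so the only commutators appearing in $[P, B_0]$ are between smooth multiplication operators and the smooth multiplier $\chi(hD)$; the nonsmoothness of $\abs{\xi}^{2s}$ at the origin never enters. Combined with the reverse triangle inequality $\norm{P B_0 u}_{L^2} \geq \norm{r^{2s} B_0 u}_{L^2} - \norm{(-h^2\Delta)^s B_0 u}_{L^2} - h^{2s}\norm{q B_0 u}_{L^2}$, this gives a clean elliptic lower bound with no partition of unity. Your approach is shorter for the operator at hand and yields the slightly sharper conclusion $\norm{B_0 u}_{H^{2s-1}_{scl}} \lesssim \norm{Pu}_{L^2} + Ch\norm{u}_{H^{2s-1}_{scl}}$. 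The paper's freezing argument, on the other hand, would extend more readily to principal parts that are not Fourier multipliers (where the vanishing of the commutator with $B_0$ fails). One minor remark: for $s < 1/2$ you should note that the same factorisation trick used for $[r^{2s},\chi(hD)]$ also handles $h^{2s}[q,\chi(hD)]$, since there too one must pass from $\norm{u}_{L^2}$ to $\norm{u}_{H^{2s-1}_{scl}}$; this is implicit in your write-up but worth stating.
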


The next lemma requires the assumption that $r$ is nontrapping, and uses a positive commutator argument to give an estimate away from $\xi=0$.

\begin{Lem} \label{lemma_bu}
For any $\eps > 0$ there are $h_0, C_{\eps} > 0$ such that 
\[
\norm{B u}_{H^{2s-1}_{scl}} \lesssim \eps \norm{u}_{H^{2s-1}_{scl}} + C_{\eps} h^{-1} \norm{Pu}_{L^2} + \norm{B_0 u}_{H^{2s-1}_{scl}}.
\]
when $0 < h < h_0$. Here the implied constant is independent of $\eps$.
\end{Lem}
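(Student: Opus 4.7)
The plan is a positive commutator argument along the lines of \cite{MSS22} (which treats the case $s=1$), carried out microlocally in the region $\{|\eta| \geq c_0/4\}$ where $B$ is supported and the semiclassical principal symbol $p(x,\eta) = |\eta|^{2s} - r(x)^{2s}$ is smooth with all derivatives under control. The Hamilton field
\[
H_p = 2s|\eta|^{2s-2}\eta \cdot \partial_x + 2s\, r(x)^{2s-1}\nabla r(x) \cdot \partial_\eta
\]
has spatial projections that are reparametrizations of $g$-geodesics, so the nontrapping assumption on $r$ transfers directly to bicharacteristics of $p$ meeting $T^*\overline{\Omega}$.

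I would first decompose $B = B_{\mathrm{el}} + B_{\mathrm{char}}$, with $B_{\mathrm{el}}$ supported in a high-frequency elliptic region $\{|\eta| \geq 2\norm{r}_{L^\infty}\}$ and $B_{\mathrm{char}}$ supported in a compact annulus around the characteristic set $\{|\eta|=r\}$. Since $p(x,\eta) \gtrsim |\eta|^{2s}$ on the support of $B_{\mathrm{el}}$, a standard elliptic parametrix construction in the semiclassical calculus yields $\norm{B_{\mathrm{el}} u}_{H^{2s-1}_{scl}} \lesssim \norm{Pu}_{L^2}$, already matching the target estimate without any $h^{-1}$ loss. For $B_{\mathrm{char}} u$ the nontrapping hypothesis furnishes an escape function: a classical construction (smoothed backward exit time along $H_p$, cut off in $\eta$ away from the origin) produces $a \in C_c^\infty(T^*\R^n)$ supported in $\{|\eta|\geq c_0/4\}$ such that $H_p a \geq c_1 > 0$ on $\{p=0\}$ over a neighborhood of $\overline{\Omega}$. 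Setting $A = \operatorname{Op}_h(a)$ and combining the commutator identity $\tfrac{i}{h}[\tilde P, A] = \operatorname{Op}_h(H_p a) + O_{L^2 \to L^2}(h)$ with the sharp G\aa rding inequality yields
\[
\operatorname{Re}\left\langle \tfrac{i}{h}[\tilde P,A] u, u \right\rangle \;\geq\; c_1 \norm{B_{\mathrm{char}} u}_{L^2}^2 - C\norm{B_0' u}_{L^2}^2 - Ch\norm{u}_{L^2}^2,
\]
where $B_0'$ is a slightly inflated version of $B_0$ absorbing the contribution of the region near $\eta=0$ where $a$ cannot be constructed smoothly.

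The left-hand side equals $\tfrac{2}{h}\operatorname{Im}\langle \tilde P u, Au\rangle$, which Cauchy--Schwarz bounds by $C_\eps h^{-2}\norm{\tilde P u}_{L^2}^2 + \eps\norm{u}_{L^2}^2$; the discrepancy $\norm{(P-\tilde P)u}_{L^2} = O(h^{2s})\norm{u}_{L^2}$ is absorbable. To upgrade from $L^2$ to $H^{2s-1}_{scl}$ one repeats the argument with $a$ replaced by $a \cdot \langle \eta\rangle^{2(2s-1)}$, which is smooth on the support of $B$ and preserves the lower bound on $H_p a$ up to a lower-order term controlled by further shrinking $\eps$. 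Combining with the elliptic bound on $B_{\mathrm{el}} u$, bounding $\norm{B_0' u}_{H^{2s-1}_{scl}}$ by $\norm{B_0 u}_{H^{2s-1}_{scl}} + \eps\norm{u}_{H^{2s-1}_{scl}}$ (by taking the inflation of $B_0$ to be only slightly larger), and absorbing the $\eps\norm{u}_{H^{2s-1}_{scl}}^2$ contributions on the right, delivers the stated inequality with $\lesssim$ constant independent of $\eps$. The main obstacle is exactly the non-smoothness of $|\eta|^{2s}$ at $\eta=0$, which forbids a global escape function construction and forces the splitting $I = B_0 + B$; the price paid is the residual $\norm{B_0 u}_{H^{2s-1}_{scl}}$ term, handled separately by Lemma \ref{lemma_bzerou}.
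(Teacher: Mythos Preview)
Your strategy---positive commutator with an escape function supplied by nontrapping, following \cite{MSS22}---is the same as the paper's. The paper differs only in packaging: it works with $P'=r^{-2s}P$ rather than $P$, builds $A$ directly of order $2s-1$ by extending the escape function homogeneously in $\xi$ (so no separate elliptic/characteristic split is needed), and runs G\aa rding on $(i[P',A]u,u)$. Your decomposition $B=B_{\mathrm{el}}+B_{\mathrm{char}}$ is a perfectly reasonable alternative.

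The substantive gap is the commutator identity $\tfrac{i}{h}[\tilde P,A]=\operatorname{Op}_h(H_p a)+O(h)$, which you invoke as if automatic. This is precisely the step that fails because $|\eta|^{2s}$ is not smooth at $\eta=0$: the standard semiclassical composition formula requires symbols in a good class globally, and having $a$ supported in $\{|\eta|\geq c_0/4\}$ does not by itself cure the problem---the composition $\tilde P A$ still sees all of $p$. The paper handles this by introducing a second, tighter cutoff pair $\tilde B_0,\tilde B$ (with $\tilde\chi$ supported in $\{|\eta|\leq c_0/8\}$, so that $B\tilde B_0=0$) and algebraically rewriting
\[
[P',A]B=[\tilde B P',A]B+\tilde B_0 P'[A,B]+[\tilde B_0,P'B]A-A[\tilde B_0,P'B],
\]
so that every commutator on the right involves only operators with smooth symbols and the calculus applies; a separate freezing-coefficients argument controls $[P',A]B_0$. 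Your $B_0'$ is a gesture in this direction but does not actually make the commutator expansion valid. A smaller issue: the claim $\norm{B_0'u}\leq\norm{B_0 u}+\eps\norm{u}$ is false as written---the multiplier $\sigma(B_0')-\sigma(B_0)$ has operator norm of order one however thin its support---though it can be rescued by observing that this annulus sits in the elliptic region of $P$, whence $\norm{(B_0'-B_0)u}\lesssim\norm{Pu}+O(h)\norm{u}$.
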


Assuming these lemmas, the proof of Proposition \ref{prop_apriori} is given as follows.

\begin{proof}[Proof of Proposition \ref{prop_apriori}]
Let first $\alpha=0$. Adding the estimates in Lemmas \ref{lemma_bzerou} and \ref{lemma_bu} yields 
\begin{align*}
\norm{u}_{H^{2s-1}_{scl}} &\leq \norm{B_0 u}_{H^{2s-1}_{scl}} + \norm{Bu}_{H^{2s-1}_{scl}} \\
 &\lesssim \norm{Pu}_{L^2} + \eps \norm{u}_{H^{2s-1}_{scl}} + C_{\eps} h^{-1} \norm{Pu}_{L^2} + \norm{B_0 u}_{H^{2s-1}_{scl}} \\
 &\lesssim \eps \norm{u}_{H^{2s-1}_{scl}} + C_{\eps} h^{-1} \norm{Pu}_{L^2}.
\end{align*}
Choosing $\eps$ small enough allows us to absorb the first term on the right, which proves the estimate when $\alpha=0$. The same estimate is also true in a slightly larger domain $\Omega_1$. We apply the estimate in $\Omega_1$ to $\psi (1-h^2\Delta)^{\alpha/2} u$ where $\psi \in C^{\infty}_c(\Omega_1)$ is a cutoff function with $\psi = 1$ near $\overline{\Omega}$. Using pseudolocal estimates for the commutator $[P,\psi (1-h^2\Delta)^{\alpha/2}]$ and the fact that $\mathrm{supp}(u) \subset \Omega$ gives the estimate for general $\alpha$.
\end{proof}

Next, we prove Lemmas \ref{lemma_bzerou} and \ref{lemma_bu}.

\begin{proof}[Proof of Lemma \ref{lemma_bzerou}]
We first let $x_0 \in \overline{\Omega}$ and consider the frozen coefficient operator $P_{x_0}$ of semiclassical symbol $p(x_0,\xi)$. Since $r(x) \geq c_0$ and $\mathrm{supp}(\chi) \subset \{ \abs{\xi} \leq c_0/2 \}$, we see that
\begin{align*}
    \|B_0 u\|_{H^{2s-1}_{scl}} & = \|\langle h\xi \rangle^{2s-1} \chi(h\xi)\hat u\|_{L^2} = \left\|\frac{\langle h\xi \rangle^{2s-1} \chi(h\xi)}{p(x_0,h\xi)}p(x_0,h\xi)\hat u\right\|_{L^2}
    \\ & \leq
    \left\| \frac{\langle h\xi \rangle^{2s-1} \chi(h\xi)}{p(x_0,h\xi)} \right\|_{L^\infty}\|p(x_0,h\xi)\hat u\|_{L^2}
    \\ & \lesssim
   \| P_{x_0} u\|_{L^2}.
\end{align*}
The implied constants are independent of $x_0\in\overline\Omega$.

In order to obtain an argument for variable coefficients, first fix $\delta>0$, and then cover $\overline\Omega$ with $N_\delta$ balls $U_j:=B(x_j,\rho)$ in such way that $|r(x)^{2s}-r(x_j)^{2s}|\leq \delta$ for all $x\in\overline{B}_j$. Let now $\{\chi_j\}_{j=1}^N$ with $0 \leq \chi_j \leq 1$ be a partition of unity subordinate to the cover $\{U_j\}$ such that $\sum \chi_j^2 = 1$ near $\overline{\Omega}$. We can also arrange so that $\norm{\partial^{\gamma} \chi_j}_{L^{\infty}} \leq C_{\delta,M}$ for $\abs{\gamma} \leq M$, uniformly over $j$. Then for any fixed $j$, one has  

\begin{align}
    \|B_0(\chi_ju)\|_{H^{2s-1}_{scl}} &\lesssim  \|P_{x_j}(\chi_j u)\|_{L^2}
    \notag \\ & \leq
     \|\tilde P(\chi_j u)\|_{L^2} + \| (r(x_j)^{2s}-r(x)^{2s} )(\chi_ju)\|_{L^2} 
    \notag \\
     & \leq \|\chi_j \tilde P u\|_{L^2} + \| [\tilde P, \chi_j] u \|_{L^2} + \delta \|\chi_ju\|_{L^2}. \label{bzero_first}
\end{align}
On the left hand side, we also estimate 
\[
 \|B_0(\chi_ju)\|_{H^{2s-1}_{scl}} \geq  \|\chi_j B_0 u\|_{H^{2s-1}_{scl}} -  \|[\chi_j, B_0]u\|_{H^{2s-1}_{scl}}.
\]
The last commutator satisfies the semiclassical estimate 
\[
\|[\chi_j, B_0]u\|_{H^{2s-1}_{scl}} \leq C_{\delta} h \|u\|_{H^{2s-1}_{scl}}.
\]

For the other commutator term, we use the fact that $\chi(D)$ is smoothing and $(1-\chi(D))\tilde P$ is a classical pseudodifferential operator to estimate
\[
\|[\chi_j,\tilde P]u\|_{L^2} \leq \|[\chi_j,\chi(D)\tilde P]u\|_{L^2} + \|[\chi_j,(1-\chi(D))\tilde P]u\|_{L^2} \leq C_{\delta}h \|u\|_{H^{2s-1}}.
\]
Thus, after squaring the estimate \eqref{bzero_first} we get 
\[
\|\chi_j B_0 u \|_{H^{2s-1}_{scl}}^2 \lesssim  \|\chi_j\tilde Pu\|_{L^2}^2 + C_{\delta} h^2 \|u\|_{H^{2s-1}_{scl}}^2 + \delta^2 \|\chi_j u\|_{L^2}^2.
\]

Adding these estimates and writing $\psi = \sum \chi_j^2$ gives 
\begin{equation} \label{psibzerou}
\|\psi B_0 u\|_{H^{2s-1}_{scl}}^2 \lesssim  \|\tilde Pu\|_{L^2}^2 + C_{\delta} h^2 \|u\|_{H^{2s-1}_{scl}}^2 + \delta^2 \| u\|_{L^2}^2.
\end{equation}
Since $\psi = 1$ near $\overline{\Omega}$ and $u \in C^{\infty}_c(\Omega)$, pseudolocal estimates give $\norm{(1-\psi)B_0 u}_{H^{2s-1}_{scl}} \lesssim h \|u\|_{H^{2s-1}_{scl}}$. Thus taking square roots in \eqref{psibzerou} gives 
\begin{equation}\label{almost-there-lemma-1}
   \|B_0 u\|_{H^{2s-1}_{scl}} \lesssim  \|Pu\|_{L^2} + C_{\delta} h \|u\|_{H^{2s-1}_{scl}} + (\delta + h^{2s} )\| u\|_{L^2}. 
\end{equation}
Here we have used the fact that $\|(P-\tilde P)u\|_{L^2}\lesssim h^{2s} \| u\|_{L^2}$.

Given $\eps > 0$, we first choose $\delta$ and $h_0$ so that $\delta + h^{2s} \leq \eps/2$ for $h < h_0$. Then we choose $h_0$ possibly even smaller so that $C_{\delta} h \leq \eps/2$ for $h < h_0$. After replacing $\eps$ by $\eps/C$ where $C$ is the implied constant, we obtain the required result.
\end{proof}

\begin{proof}[Proof of Lemma \ref{lemma_bu}]
Define the operator $P':= r(x)^{-2s}P$. Since $r \geq c_0$ in $\R^n$, it is enough to prove the estimate for $P'$ instead than $P$. We shall use the \emph{positive commutator method}, which requires the construction of a self-adjoint semiclassical pseudodifferential operator $A$ in $\R^n$ of order $2s-1$ (called a \emph{commutant}) such that the following properties hold for all $u\in C^\infty_c(\Omega)$:
\begin{enumerate}
    \item $\|Au\|_{L^2}\lesssim \|u\|_{H^{2s-1}_{scl}}$,
    \item $(i[P',A]u,u)_{L^2}\geq ch\|Bu\|^2_{H^{2s-1}_{scl}} - Ch\|B_0 u\|^2_{H^{2s-1}_{scl}}$.
\end{enumerate}

If such operator $A$ can be constructed, then we have
\begin{align*}
ch\|Bu\|^2_{H^{2s-1}_{scl}} - Ch\|B_0 u\|^2_{H^{2s-1}_{scl}} & \leq (i[P',A]u,u)_{L^2}
\\ & \leq
2 \, \|Au\|_{L^2} \, \|P'u\|_{L^2}
\\ & \leq
\varepsilon^2 h \|Au\|_{L^2}^2 + (\varepsilon^2 h)^{-1}\|P'u\|_{L^2}^2
\\ & \leq
c_1\varepsilon^2 h \|u\|^2_{H^{2s-1}_{scl}} + (\varepsilon^2 h)^{-1}\|P'u\|_{L^2}^2,
\end{align*}
and therefore
$$ \|Bu\|^2_{H^{2s-1}_{scl}}  \lesssim \varepsilon^2 \|u\|^2_{H^{2s-1}_{scl}} + \|B_0 u\|^2_{H^{2s-1}_{scl}} + \varepsilon^{-2}h^{-2}\|P'u\|^2_{L^2}. $$
This proves the required estimate.

In order to prove the existence of a suitable commutant $A$, we follow the same method as in the proof of \cite[Lemma 3.2]{MSS22}. Consider the simple manifold  $(\Omega,g)$, where the metric $g$ is given by $$ g|_{(x_1, ..., x_n)} := r(x)^{-2}\delta_{ij} dx^i\otimes dx^j, \qquad |g|:= \det g_{ij} = r(x)^{-2n}, $$
$$ \langle X,Y\rangle_g := r(x)^{-2} X\cdot Y, \qquad |X|_g := r(x)^{-1} |X|. $$ Let $$p'(x,\xi):=r(x)^{-2s}h^{2s}|\xi|^{2s} = h^{2s}|\xi|_g^{2s}$$ be the semiclassical principal symbol of $P'$, let $H_{p'}$ be its Hamiltonian vector field, and let $a\in C^\infty(S^*\Omega)$ be a function such that $H_{p'}a>0$ in $S^*\Omega$. Observe that such function exists, because $(\Omega,g)$ is nontrapping. If $\Omega\Subset \Omega_2$, we can extend $a$ smoothly to $T^*\Omega_2$ as a symbol that is homogeneous of degree $2s-1$ for $|\xi|_g\geq 1$, and such that
$$H_{p'}a(x,\xi)\gtrsim |\xi|_g^{2s}, \qquad \xi\in T^*\Omega_1,\; |\xi|_g\sim 1, $$
for some $\Omega_1$ such that $\Omega\Subset \Omega_1\Subset \Omega_2$.

The semiclassical differential operator $A$ of order $2s-1$ is now obtained by Weyl quantization of its real valued symbol $a$. Because $p'$ is not smooth, we are not allowed to use the semiclassical estimate for the commutator $[P',A]$. However, recall that by definition $B_0 := \chi(h D) $ and $B:=I-B_0$, with 
$$ \chi(\xi)=1 \quad \mbox{ if } \, |\xi| \leq \frac{c_0}{4}, \qquad  \chi(\xi)=0 \quad \mbox{ if } \, |\xi| \geq \frac{c_0}{2}. $$
Let us similarly define the operators $\tilde B_0 := \tilde\chi(h D) $ and $\tilde B:=I-\tilde B_0$, with 
$$ \tilde\chi(\xi)=1 \quad \mbox{ if } \, |\xi| \leq \frac{c_0}{16}, \qquad  \tilde\chi(\xi)=0 \quad \mbox{ if } \, |\xi| \geq \frac{c_0}{8}. $$ 
Then we can compute
\begin{align*}
    [P',A]B & = [\tilde B P',A]B  + [\tilde B_0 P',A] B 
    \\ & = [\tilde B P',A]B + \tilde B_0 P'A B - A\tilde B_0 P' B  
    \\ & = [\tilde B P',A]B + \tilde B_0 P' [A, B] + \tilde B_0 P' B A - A[\tilde B_0, P'B]
    \\ & = [\tilde B P',A]B + \tilde B_0 P' [A, B] + [\tilde B_0, P'B] A - A[\tilde B_0, P'B].
\end{align*}
Here we used that $\tilde B_0 P' B= [\tilde B_0, P'B]$, which holds because $B\tilde B_0=0$. Observe that the operators appearing in the commutators on the right-hand side have smooth symbols, and so we can use the semiclassical estimate to deduce
$$ \| [P',A]Bu \|_{H^{1-2s}_{scl}} \lesssim h \| Bu \|_{H^{2s-1}_{scl}}. $$
Moreover, using a freezing coefficients argument followed by partition of unity as in the proof of Lemma \ref{lemma_bzerou}, we are able to deduce that
\begin{align*}
    \|\chi_j P'AB_0u\| & \leq \|P'A\chi_j B_0u\| + \|[\chi_j ,P'A]B_0u\|
    \\ & \lesssim
    \|P'_{x_0}A_{x_0}\chi_j B_0u\| + \|\left( r(x)^{2s}a(x,\xi) - r(x_0)^{2s}a(x_0,\xi) \right)\chi_j B_0u\| + C_\delta\|B_0u\|
    \\ & \lesssim
    (1 + \delta)\|\chi_j B_0u\| + C_\delta\|B_0u\|,
\end{align*}
which reveals that $ \|P'AB_0u\| \lesssim h \|B_0u\| $. Because a similar estimate holds for $AP'B_0u$ as well, we deduce that
$$ \| [P',A]B_0u \|_{H^{1-2s}_{scl}} \lesssim h\| B_0u \|_{H^{2s-1}_{scl}}. $$

It now follows from the semiclassical Gårding inequality and the Cauchy-Schwarz inequality with $\varepsilon$ that
\begin{align*}
    (i[P',A]u,u)_{L^2} & = (i[P',A]Bu,Bu)_{L^2} + (i[P',A]Bu,B_0u)_{L^2} + (i[P',A]B_0u,u)_{L^2} 
    \\ & \geq
    c'h\|Bu\|^2_{H^{2s-1}_{scl}} -  \|i[P',A]Bu\|_{H^{1-2s}_{scl}}\|B_0u\|_{H^{2s-1}_{scl}}
    \\ & \quad - \|i[P',A]B_0u\|_{H^{1-2s}_{scl}}\left(\|Bu\|_{H^{2s-1}_{scl}} + \|B_0u\|_{H^{2s-1}_{scl}}\right)
    \\ & \geq
    c'h\|Bu\|^2_{H^{2s-1}_{scl}} - 2h \|Bu\|_{H^{2s-1}_{scl}}\|B_0u\|_{H^{2s-1}_{scl}} - h\|B_0u\|^2_{H^{2s-1}_{scl}}
     \\ & \geq
    (c'-\varepsilon) h\|Bu\|^2_{H^{2s-1}_{scl}} - C_\varepsilon h\|B_0u\|^2_{H^{2s-1}_{scl}}.
\end{align*}
This proves the second commutator inequality.
\end{proof}

 Finally, we prove Proposition \ref{first-solvability-proposition} in light of Proposition \ref{prop_apriori}:
\begin{proof}[Proof of Proposition \ref{first-solvability-proposition}]
    Let $\alpha\in \R$, and consider $f\in H^\alpha(\Omega)$. Define the subspace $E:= L(C^\infty_c(\Omega))$ of $H^{1-2s-\alpha}_{scl}(\R^n)$, where $L:= (-\Delta)^s-\tau^{2s}r^{2s}+q$. Finally, define on $E$ the functional $T$ given by $$T(Lv):= \langle f,v \rangle, \qquad \mbox{ for all } v\in C^\infty_c(\Omega). $$
    Observe that $T$ is well-posed by the linearity of $L$ and the estimate in Proposition \ref{prop_apriori}. The same inequality also implies that $T$ is bounded, since $(H^\alpha_{scl}(\Omega))^*= H^{-\alpha}_{scl,\overline\Omega}(\R^n)$ and therefore
    \begin{align*}
        |T(Lv)| & = |\langle f,v \rangle| \leq \|f\|_{H^\alpha_{scl}(\Omega)}\|v\|_{(H^\alpha_{scl}(\Omega))^*} \lesssim \|f\|_{H^\alpha_{scl}(\Omega)} h^{2s-1} \|Lv\|_{H_{scl}^{1-2s-\alpha}(\R^n)}.
    \end{align*}
    By the Hahn-Banach theorem, it is possible to find an extension $\tilde T$ of $T$ which is a bounded functional on $H^{1-2s-\alpha}_{scl}(\R^n)$  with norm at most $\|f\|_{H^\alpha_{scl}(\Omega)} h^{2s-1}$. Using the Riesz representation theorem, we find the unique $\tilde u \in (H^{1-2s-\alpha}_{scl}(\R^n))^* = H^{2s+\alpha-1}_{scl}(\R^n)$ such that $$ \tilde T(w) = \langle \tilde u,w \rangle, \qquad \mbox{ for all } w\in H^{1-2s-\alpha}_{scl}(\R^n).$$
    If now $u:=\tilde u|_{\Omega}$, we have $$\|u\|_{H^{2s+\alpha-1}_{scl}(\Omega)} \leq \|\tilde u\|_{H^{2s+\alpha-1}_{scl}(\R^n)} \lesssim h^{2s-1} \|f\|_{H^\alpha_{scl}(\Omega)} , $$ and for all $v\in C^\infty_c(\Omega)$ we have
    $$ \langle f, v \rangle = T(Lv) = \tilde T(Lv) = \langle \tilde u, Lv \rangle  = \langle L\tilde u,v \rangle.  $$ 
\end{proof}

The proofs of Theorems \ref{thm_main1} and \ref{thm_main2} now follow by combining Proposition \ref{first-solvability-proposition} with each of the two cases of Proposition \ref{prop:approx-sol-rq}.

\section{Applications to inverse problems} \label{sec:applications}

We begin by considering the inverse problem for the operator \begin{equation}\label{op-scalar-case}
    (-\Delta)^s - \tau^{2s} r(x)^{2s} + q(x)
\end{equation}
consisting in recovering the refraction index $r\in L^\infty(\Omega)$ and potential $q\in L^\infty(\Omega)$ from DN data. The proof of well-posedness for the Dirichlet problem associated to the above operator in the space $H^s(\R^n)$ does not differ substantially to the one obtained in \cite{GSU16} for the case $r\equiv 0$. We shall always assume that $0$ is not a Dirichlet eigenvalue for said problem, which ensures the well-definedness of the DN map $\Lambda_{\tau,r,q}: H^s(\Omega_e)\rightarrow H^s(\Omega_e)^*$ via the associated bilinear form $B_{\tau,r,q}$: 
$$\langle \Lambda_{\tau,r,q}[f],[g] \rangle := B_{\tau,r,q}(u_f,g) = \langle (-\Delta)^{s/2} u_f, (-\Delta)^{s/2}g \rangle + \langle (q- \tau^{2s} r^{2s})r_\Omega u_f,r_{\Omega}g \rangle_{\Omega}. $$
Using the above definition, it is easy to obtain an Alessandrini identity. For $j=1,2$, let $f_j\in C^\infty_c(\Omega_e)$ be an exterior value, and assume that $u_j\in H^s(\R^n)$ is the corresponding solution in the Dirichlet problem for the operator \eqref{op-scalar-case} with coefficients $r_j, q_j \in L^\infty(\Omega)$. Let also $\Lambda_{\tau,j} := \Lambda_{\tau,r_j,q_j}$. Then it holds
\begin{equation}\label{alessandrini-scalar}
    \langle (\Lambda_{\tau,1}-\Lambda_{\tau,2})[f_1],[f_2] \rangle = \langle ((q_1 - q_2) + \tau^{2s} (r_2^{2s} - r_1^{2s}))r_\Omega u_1,r_{\Omega} u_2 \rangle_{\Omega},
\end{equation}
from which we can easily obtain a uniqueness result for fixed frequencies using the Runge approximation technique from \cite{GSU16}. However, the stability associated to this method is at most logarithmic. In this section, we will prove Theorem \ref{Th:stability-high-s}, thus showing that Hölder stability for the inverse problem in the regime $s\geq 1/2$ can be achieved by using high frequencies. We begin by rigorously defining the Cauchy data $C_{r,q_j}^{\tau}$ associated to the operator $$P_j := (-\Delta)^s - \tau^{2s} r^{2s} + q_j$$ and the distance function $\delta$ measuring the distance between the Cauchy data for the potentials $q_1, q_2$.

The operator $P_j$ is associated to the symmetric bilinear form $$ B_j(u,v):= ((-\Delta)^{s/2}u, (-\Delta)^{s/2}v)_{\R^n} - \tau^{2s} (r^{2s}u|_{\Omega},v|_{\Omega})_{\Omega} + (q_ju|_{\Omega},v|_{\Omega})_{\Omega},$$
defined for all $u,v\in H^s(\R^n)$. Let $S_j$ be the linear subspace of $H^s(\R^n)$ consisting of weak solutions to $P_ju=0$ in $\Omega$, that is 
$$ S_j := \{ u\in H^s(\R^n) : P_ju=0 \mbox{ in } \Omega \}, $$
and consider the linear operator $T_j : S_j \rightarrow X^*$ defined for all $v\in H^s(\R^n)$ by
$$ \langle T_ju, [v] \rangle:=  B_j(u,v). $$
We have to check that $T_j$ is actually well-defined. Recall that $X$ is given by $$X:= H^s(\R^n)/\widetilde H^s(\Omega),$$ 
to which we associate the usual quotient norm. Thus $T_j$ will be well-defined as soon as $B_j(u,v)=B_j(u,v+\varphi)$ for all $u\in S_j$, $v\in H^s(\R^n)$ and $\varphi \in \widetilde H^s(\Omega)$. This is true because $u\in S_j$ implies $B_j(u,\varphi)=0$ for all $\varphi\in \widetilde H^s(\Omega)$. The operator $T_j$ is bounded, since for all $\varphi\in \widetilde H^s(\Omega)$ we have
\begin{align*}
    \langle  T_ju, [v] \rangle = B_j(u,v+\varphi) \lesssim \|u\|_{H^s(\R^n)}\|v+\varphi\|_{H^s(\R^n)},
\end{align*}
and thus, by taking infimum with respect to $\varphi\in \widetilde H^s(\Omega)$ we get
\begin{align*}
    \|T_ju\|_{X^*} = \sup_{\|[v]\|_X=1} \langle  T_ju, [v] \rangle \lesssim \|u\|_{H^s(\R^n)}\sup_{\|[v]\|_X=1}\inf_{\varphi\in \widetilde H^s(\Omega)} \|v+\varphi\|_{H^s(\R^n)} = \|u\|_{H^s(\R^n)}.
\end{align*}

It is known that if $\Omega$ is Lipschitz, then $X$ can be naturally identified with $H^s(\Omega_e)$; however, recall that while objects in $X$ are equivalence classes with representatives in $H^s(\R^n)$, the elements of $H^s(\Omega_e)$ are restrictions of $H^s(\R^n)$ functions to $\Omega_e$. As a consequence, $X^*$ can be identified with $(H^s(\Omega_e))^*$, and therefore with $ H^{-s}_{\overline \Omega_e}(\R^n)$, which is the set of $H^s(\R^n)$ functions with support in $\overline\Omega_e$. Finally, only for $s<1/2$ we can identify $ H^{-s}_{\overline \Omega_e}(\R^n)$ with $H^{-s}(\Omega_e)$. With these distinctions in mind, we define the following Cauchy data
$$C_{r,q_j}^{\tau}:=\{ ([u], T_ju) \in X\times X^* : u\in S_j \}$$
and the distance
\begin{align*}
    \delta(C_{r,q_1}^{\tau},C_{r,q_2}^{\tau}) :&= \max_{j\neq k} \sup_{\substack{
    \{u\in S_k : \| [u] \|_X=1\}} 
} \inf_{ v \in S_j} \| ([v], T_jv) - ([u], T_ku) \|_{X\times X^*} 
\\ & = \max_{j\neq k} \sup_{\substack{
    \{u\in S_k : \| [u] \|_X=1\}} 
} \inf_{ v \in S_j} \left(\| [v-u] \|_{X} + \| T_jv - T_ku \|_{X^*} \right) .
\end{align*} 
Observe that when $\Omega$ is Lipschitz both spaces $X, X^*$ can be identified with spaces of functions on $\Omega_e$, and thus the Cauchy data given above is exterior. If $s<1/2$, both the Cauchy data and the distance function coincide with the usual ones. \\

We can now turn to the proof of Theorem \ref{Th:stability-high-s}.

\begin{proof}[Proof of Theorem \ref{Th:stability-high-s}]
\textbf{Step 1.} Using the symbols introduced in the above discussion, let $u_1 \in S_1$ and $u_2,\tilde u_2\in S_2$. We compute
\begin{align*}
 ((q_1-q_2) u_1|_{\Omega}, u_2|_{\Omega})_{\Omega} & = B_1(u_1,u_2) - B_2(u_1,u_2) 
 \\ & = B_1(u_1,u_2) - B_2(\tilde u_2,u_2) + B_2(u_2,\tilde u_2-u_1) 
 \\ & = \langle T_1u_1- T_2\tilde u_2, [u_2]\rangle +\langle T_2u_2,[\tilde u_2-u_1]\rangle 
 \\ & \leq 
 \|T_1u_1- T_2\tilde u_2\|_{X^*} \|[u_2]\|_X + \|T_2u_2\|_{X^*}\|[\tilde u_2-u_1]\|_X.
 \end{align*} 
 By the boundedness of $T_2$ and the well-posedness estimate for the problem relative to operator $P_2$, we have for all $\varphi\in \widetilde H^s(\Omega)$
 $$ \|T_2u_2\|_{X^*}\lesssim \|u_2\|_{H^s(\R^n)} \lesssim  \|u_2+\varphi\|_{H^s(\R^n)}, $$
 and thus by taking the infimum with respect to $\varphi$ we get $\|T_2u_2\|_{X^*}\lesssim \|[u_2]\|_{X}$. Eventually, we obtain the estimate
 \begin{align*}
 ((q_1-q_2) u_1|_{\Omega}, u_2|_{\Omega})_{\Omega} &\lesssim  
 \left(\|T_1u_1- T_2\tilde u_2\|_{X^*} + \|[\tilde u_2-u_1]\|_X\right)\|[u_2]\|_{X}
 \\ & =  
 \left(\|T_1v_1- T_2\tilde v_2\|_{X^*} + \|[\tilde v_2-v_1]\|_X\right)\|[u_1]\|_{X}\|[u_2]\|_{X},
 \end{align*}
 where we normalized $v_1 := u_1/\|[u_1]\|_{X}$ and $\tilde v_2 := \tilde u_2/\|[u_1]\|_{X}$, in order to have $ \|[v_1]\|_{X}=1$. Taking on both sides the infimum with respect to $\tilde v_2\in S_2$ and then the supremum with respect to $v_1\in S_1$ with $ \|[v_1]\|_{X}=1$, we finally get
\begin{align*}
|((q_1-q_2) u_1|_{\Omega}, u_2|_{\Omega})_{\Omega}| \lesssim \delta(C_{r,q_1}^{\tau},C_{r,q_2}^{\tau})\|[u_1]\|_{X}\|[u_2]\|_{X} .
\end{align*}
If $\Omega$ is Lipschitz, we can alternatively restate the above inequality as
\begin{equation}\label{stability-th-eq:1}
|((q_1-q_2) u_1|_{\Omega}, u_2|_{\Omega})_{\Omega}| \lesssim \delta(C_{r,q_1}^{\tau},C_{r,q_2}^{\tau})\|u_1|_{\Omega_e}\|_{H^s(\Omega_e)}\|u_2|_{\Omega_e}\|_{H^s(\Omega_e)} .
\end{equation}

\textbf{Step 2.} Next, we will show that $\norm{u_j}_{H^s(\Omega_e)} \leq C \tau^{s}$. To this end, let $M\in\N$, and let $u_M:= e^{i\tau\varphi_0} a$ be the approximate solution constructed in Section \ref{Sec:approx}. The phase $\varphi_0$ and the amplitude $a$ are obtained by solving eikonal and transport equations near $\overline\Omega$. Given an open set $\Omega'\supset \overline\Omega$, define a smooth cutoff function $\chi\in C^\infty_c(\Omega')$ that equals $1$ near $\overline\Omega$, and let $u'_M:= \chi u_M = e^{i\tau\varphi_0} \chi a$. After a normalization we can fix $\|u'_M\|_{L^2(\R^n)}=1$, which implies
$$ \|\nabla u'_M\|_{L^2(\R^n)} \leq  \tau \|\nabla\varphi_0 u'_M \|_{L^2(\R^n)} + O(1) = O(\tau),$$
and therefore $\|u'_M\|_{H^1(\R^n)} = O(\tau)$. It now follows by interpolation that
$$ \|u'_M\|_{H^s(\R^n)} \leq \|u'_M\|_{L^2(\R^n)}^{1-s}\|u'_M\|_{H^1(\R^n)}^s = O(\tau^s). $$
The approximate solution $u'_M$ can be upgraded to a solution $u = u'_M + r_M$ to the problem $Lu=0$ in $\Omega$, where by Proposition \ref{first-solvability-proposition} for all $\alpha\in\R$ the error $r_M$ verifies
$$ \|r_M\|_{H^{2s-1+\alpha}_{scl}(\R^n)} \lesssim \tau^{1-2s}\|Lu'_M\|_{H^\alpha_{scl}(\Omega)}.$$
Since by fixing $\alpha = 1-s$ we can compute for large $\tau$
\begin{align*}
    \|r_M\|_{H^s(\R^n)} & \leq \tau^{s}\|r_M\|_{H^s_{scl}(\R^n)}
    \\ & \lesssim
    \tau^{1-s}\|Lu'_M\|_{H^{1-s}_{scl}(\Omega)}
    \\ & \leq 
    \tau^{1-s}\|Lu'_M\|_{L^2(\Omega)}^{s}\|Lu'_M\|_{H^1_{scl}(\Omega)}^{1-s},
\end{align*}
in order to obtain the wanted estimate for $\|u\|_{H^s(\R^n)}$ we only need to estimate $Lu'_M = L(\chi u_M)$ in the norm of $H^{\beta}_{scl}(\Omega)$, for $\beta=1,2$. Recall that we have already obtained similar estimates for $Lu_M$ as part of the construction of geometrical optics approximate solutions. These were derived from formula \eqref{after-eikonal}, which for $u'_M$ implies
\begin{equation*}\begin{split}
& e^{-i\tau\varphi}L u'_M =  
 \sum_{\mu=1}^{N}  \sum_{\substack{ \{ t_j \}_{j=1}^{F_M} \subset \N: \\ \sum_j t_j\leq \mu  }} \tau^{2s-\mu+\sum_{j=1}^{F_M} t_{j}(1-2sj)} L_{\mu;\mathcal T'}(\chi a)  + q \chi a + O(\tau^{-n-N/2}).
\end{split}\end{equation*}
We obtained the components $a_l$ of the amplitude $a$ by solving transport equations close to $\overline\Omega$ for all the distinct powers of $\tau$, up to a fixed $\tau^{-A_M}$. Given the definition of the cutoff function $\chi$ and the fact that all the operators $L_{\mu;\mathcal T'}$ are local, we see that for $x\in\Omega$
$$Lu'_M = Lu_M + O(\tau^{-n-N/2}),$$
and thus we have the same estimates for $\|Lu'_M\|_{H^\beta_{scl}(\Omega)}$ as for $\|Lu_M\|_{H^\beta_{scl}(\Omega)}$. Eventually
\begin{align*}
\|u\|_{H^s(\R^n)} & \leq \|u'_M\|_{H^s(\R^n)} + \|r_M\|_{H^s(\R^n)}
\\ & \lesssim
\tau^{1-s} \|Lu'_M\|_{L^2(\Omega)}^{s}\|Lu'_M\|_{H^1_{scl}(\Omega)}^{1-s} + O(\tau^s)
\\ & \lesssim
\tau^{1-s} \|Lu_M\|_{L^2(\Omega)}^{s}\|Lu_M\|_{H^1_{scl}(\Omega)}^{1-s} + O(\tau^s)
\\ & \lesssim
\tau^{1-s-M} + O(\tau^s) = O(\tau^s).
\end{align*}

The argument above holds for both $u_1$ and $u_2$, so we have obtained in particular $\norm{u_j}_{H^s(\Omega_e)} \lesssim \tau^{s}$ for $j=1,2$. Coming back to equation \eqref{stability-th-eq:1}, we get that the inequality
\begin{equation}\label{stability-th-eq:2}
\int_\Omega (q_1-q_2)u_1\overline{u_2} \,dx  \lesssim  \left[ \sup_{\tau \geq \tau_0} \delta(C_{r,q_1}^{\tau}, C_{r,q_2}^{\tau}) \right] \tau^{2s}
\end{equation}
holds for all $\tau\geq \tau_0$. For simplicity of notation, we will let $\delta:= \sup_{\tau \geq \tau_0} \delta(C_{r,q_1}^{\tau}, C_{r,q_2}^{\tau})$. \\

\textbf{Step 3.} We now argue as in Section 4 of \cite{MSS22}. Consider the simple manifold  $(\Omega,g)$, where the metric $g$ is given by $$ g|_{(x_1, ..., x_n)} := r(x)^{-2}\delta_{ij} dx^i\otimes dx^j, \qquad |g|:= \det g_{ij} = r(x)^{-2n}, $$
$$ \langle X,Y\rangle_g := r(x)^{-2} X\cdot Y, \qquad |X|_g := r(x)^{-1} |X|. $$  We can choose an open set $\Omega'\supset \overline\Omega$ such that $(\Omega',g)$ is also simple (recall that $r\equiv 1$ outside of $\Omega$). If $p$ is any point such that $p\in\p\Omega'$, by the properties of simple manifolds each $x\in \Omega$ can be expressed as $x=\exp_p(\rho\theta)$, where $\rho>0$ and $\theta\in S^{n-1}$. This defines the \emph{polar normal coordinates} on $\Omega$, which allow us to write the metric $g$ as
$$ g|_{(\rho,\theta)} = d\rho^2 + g_0(\rho,\theta)d\theta \otimes d\theta, \qquad |g|=|g_0|.$$
We observe that the functions $\varphi_0(\rho,\theta)=\pm\rho$ solve the eikonal equation. In fact, since the $\varphi_0$ defined above do not depend on $\theta$, by the expression of the metric in polar normal coordinates we get
$$ r(x)^{-2}\nabla\varphi_0 \cdot \nabla \psi = \langle\nabla\varphi_0, \nabla \psi\rangle_g = \pm \p_\rho\psi, \qquad  |\nabla\varphi_0| = r(x)|\nabla\varphi_0|_g = r(x). $$ \\

\textbf{Step 4.} Having solved the eikonal equation and found the phase function $\varphi_0$, we now turn to the transport equation for the principal amplitude $a_0$. We see that $a_0$ solves
\begin{equation*}
    \begin{cases}
        L_{1;0}a_0=0, & \mbox{ if }  s\in (1/2,1), \\
        (L_{1;0}+q)a_0=0, & \mbox{ if }  s = 1/2,
    \end{cases}
\end{equation*}
and thus we need to compute the operator $L_{1;0}$. Let us write the Laplace-Beltrami operator $\Delta_g$ in Euclidean and polar normal coordinates. We have
\begin{align*}
    \Delta_g \varphi_0 & = |g|^{-1/2}\p_i(|g|^{1/2}g^{ij}\p_j \varphi_0)  = r^{n}\nabla\cdot(r^{2-n}\nabla \varphi_0) \\ & =  r^{2}\Delta \varphi_0 + (2-n)r\nabla r\cdot\nabla \varphi_0 = r^{2}\Delta \varphi_0 \pm (2-n)r^3\p_\rho r
\end{align*}
and 
$$ \Delta_g \varphi_0 = |g|^{-1/2}\p_\rho ( |g|^{1/2}\p_\rho \varphi_0 )=  \pm r^{n}\p_\rho ( r^{-n} ) = \mp n r^{-1}\p_\rho r,   $$
so that 
$$ r^{2}\Delta \varphi_0  = \mp \left( n r^{-1} + (2-n)r^3 \right) \p_\rho r, $$
or equivalently
$$ \mp r^{-2}\Delta \varphi_0  = \left( n r^{-5} + (2-n)r^{-1} \right) \p_\rho r. $$
As observed in the Appendix, we can write the operator $L_{1;0}$ as
\begin{align*}
L_{1;0} =\pm 2si r^{2s} \left( \p_\rho \mp \frac{r^{-2}\Delta\varphi_0 }{2}+(1-s) r^{-1}\p_\rho r\right),
\end{align*}
where the sign changes are due to the fact that $L_{1;0}$ depends on the phase function $\varphi_0$. Thus we get
\begin{align*}
L_{1;0} &=  
    \pm 2si r^{2s} \left( \p_\rho +   \left( \frac{n}{2} r^{-5} + \left(2-\frac{n}{2}-s\right)r^{-1} \right) \p_\rho r \right) =
    \pm 2si r^{2s}\left( \p_\rho +  \p_\rho f \right),
    \end{align*}
where $f(r):=  \left(2-\frac{n}{2}-s\right)\log r -\frac{n}{8} r^{-4}$, and the principal amplitude $a_0$ must solve
\begin{equation*}
    \begin{cases}
        \p_\rho a_0 +  a_0 \p_\rho f =0, & \mbox{ if }  s\in (1/2,1), \\
        \p_\rho a_0 +  a_0\left(\p_\rho f \mp i q r^{-1}\right) =0, & \mbox{ if }  s = 1/2.
    \end{cases}
\end{equation*}
Therefore, we obtain $a_0(\rho,\theta) = b(\theta)e^{-f(\rho,\theta)+iJ(q(\rho,\theta))}$, where $b$ is any function depending only on $\theta$, and
\begin{equation*}
  J(q(\rho,\theta)) := \begin{cases}
        0, & \mbox{ if }  s\in (1/2,1), \\
       \pm \int_0^\rho q(t,\theta) r(t,\theta)^{-1}dt, & \mbox{ if }  s = 1/2.
    \end{cases}
\end{equation*}
In polar normal coordinates, an approximate solution (of order $M\in\N$) corresponding to a fixed potential $q$ will take the form
$$ u_M(\rho,\theta) = 
 e^{\pm i\tau \rho} a_{0} + v_M, $$
where the error $v_M$ depending on $b$ and $q$ is defined as  $$v_M :=  e^{i\tau\varphi_0}\sum_{l=1}^{A_M} \tau^{-\alpha_l} a_{l}.$$
It also follows from the construction of approximate geometrical optics solutions that for all $j\in\N$ the amplitude $a_{j+1}$ is obtained by solving the transport equation $$ L_{1;0}a_{j+1} = \sum_{i=0}^j \tilde L_{ij} (a_i),$$
where the differential operator $\tilde L_{ij}$ has order at most $N$, and does not depend on the function $b$. Thus we can estimate $\|a_{j+1}\|_{H^\beta} \lesssim \sum_{i=0}^j \|a_i\|_{H^{\beta+N}}$, and ultimately using induction we obtain that there exists a $t_M\in\N$ so large that $$ \|a_{j}\|_{L^2} \lesssim \|a_0\|_{H^{t_M}} , \qquad \mbox{for all } j\in\{ 1, ..., A_M\}.$$
This allows us to estimate
$$ \|v_M\|_{L^2} \lesssim \tau^{-\alpha_1} \|a_0\|_{H^{t_M}} , $$
where the implied factor depends on $M$, but not on the function $b$. The exponent $\alpha_1$ in the above formula is as in the statement of Proposition \ref{prop:approx-sol-rq}, and thus it is the smallest real number in the set $(\N + (2s-\floor{2s})\N)\setminus \{0\}$. As such, we have 
$$ \alpha_1 = \begin{cases}
    2s-1 & \mbox{ if } s\in (1/2,1), \\
    1 & \mbox{ if } s=1/2.
\end{cases}$$\\

\textbf{Step 5.} Let now $u_1$ be the exact solution of equation $((-\Delta)^s - \tau^{2s} r^{2s} + q_1) u_1 = 0$ given by $$ u_1 = u_{M,1} + r_{M,1} = e^{\pm i\tau \rho } b_1 e^{-f+iJ(q_1)} + v_{M,1} + r_{M,1}, $$
where $b_1, q_1$ are a fixed smooth function and potential. We are indicating by $r_{M,1}$ the error due to the approximation to order $M\in \N$ from approximate geometrical optics to exact solution, while by $v_{M,1}$ we are indicating the error due to the non-principal amplitudes, as defined above. We can do a similar construction for $u_2$, which corresponds to potential $q_2$ and function $b_2$. This gives
\begin{align*}
     \left|\int_\Omega (q_1-q_2)u_{M,1}\overline{u_{M,2}} dx \right| \leq  \left|\int_\Omega (q_1-q_2)u_1\overline{u_2} dx \right| + O(\tau^{1-M}),
\end{align*}
where the small error of order $\tau^{1-M}$ is independent of the functions $b_1, b_2$ and is due to the error terms $r_{M,1}, r_{M,2}$, which we estimated as above. We also compute
\begin{align*}
    \left|\int_\Omega (q_1-q_2) e^{-2f+iJ(q_1-q_2)} b_1 b_2 \; dx\right| & \leq \left| \int_\Omega (q_1-q_2)u_{M,1}\overline{u_{M,2}} dx\right| \\ & \quad + C\tau^{-\alpha_1}\|a_{0,1}\|_{H^{t_M}}\|a_{0,2}\|_{H^{t_M}},
\end{align*}
where the error term of order $\tau^{-\alpha_1}$ is obtained from the estimates for $v_{M,1}, v_{M,2}$ and the constant $C$ is independent of $b_1, b_2$. \\

For the sake of simplicity, let us define
\begin{align*}
    Q:= (q_1-q_2) e^{-2f+iJ(q_1-q_2)}r^{-n}.
\end{align*}
Using the last two inequalities and \eqref{stability-th-eq:2}, we now obtain the estimate
\begin{align*}
    \left|\int_\Omega Q r^{n} b_1 b_2 \; dx\right| & \lesssim \delta \tau^{2s} + \tau^{-\alpha_1}\|a_{0,1}\|_{H^{t_M}}\|a_{0,2}\|_{H^{t_M}},
\end{align*} 
where the implied constant is independent of $b_1, b_2$.
Since $dx = |g|^{1/2}d\rho d\theta = r^{-n} d\rho d\theta $, using the fact that $q_1=q_2$ in $\R^n\setminus\Omega$ we have
\begin{equation}
    \label{geodesic-x-ray-transform}
    \int_\Omega Q r^{n} b_1 b_2 \; dx = \int_{\p_+S_p\Omega'} b_1(\theta) b_2(\theta)\int_0^{\tau_{\Omega'}(p,\theta)} Q(\rho,\theta) d\rho d\theta = \int_{\p_+S_p\Omega'} b_1 b_2 IQ(p,\cdot) d\theta,
\end{equation}
where $I$ is the geodesic ray transform on $\Omega'$. This gives us the estimate

\begin{align*}
    \left|\int_{\p_+S_p\Omega'} b_1 b_2 IQ(p,\cdot) d\theta\right| & \lesssim \delta \tau^{2s} + \tau^{-\alpha_1}\|a_{0,1}\|_{H^{t_M}}\|a_{0,2}\|_{H^{t_M}},
\end{align*}
which holds for every choice of $b_1,b_2$ with a uniform constant independent of the point $p\in\p\Omega'$. Therefore, integrating over $\p\Omega'$ gives
\begin{align*}
    |\int_{\p_+S\Omega'} &b_1(\theta) b_2(\theta)  IQ(p,\theta) d(\p S\Omega)| \leq \int_{\p\Omega'} |\int_{\p_+S_p\Omega'} b_1(\theta) b_2(\theta) IQ(p,\theta) d\theta | dp
    \\ & \lesssim
   \delta \tau^{2s} + \tau^{-\alpha_1}\int_{\p\Omega'} \|a_{0,1}\|_{H^{t_M}}\|a_{0,2}\|_{H^{t_M}}dp
    \\ & \lesssim
   \delta \tau^{2s} + \tau^{-\alpha_1}\int_{\p\Omega'} \|b_{1}\|_{H^{t_M}(\p_+S_p\Omega')}\|b_{2}\|_{H^{t_M}(\p_+S_p\Omega')}dp.
\end{align*}
 Let us choose $b_1(\theta) := I(I^*IQ)\nu_p\cdot\theta$, where $\nu_p$ is the normal to $\p\Omega'$ at $p$, and $b_2(\theta):=1$. By the boundedness of $I, I^*I$, this gives
\begin{align*}
    \| I^*IQ \|_{L^2(\Omega')}^2 & \lesssim 
    \delta \tau^{2s} + \tau^{-\alpha_1}\int_{\p\Omega'} \|I(I^*IQ)\|_{H^{t_M}(\p_+S_p\Omega')}dp
    \\ & \lesssim
    \delta \tau^{2s} + \tau^{-\alpha_1} \|Q\|_{H^{t_M}(\Omega)}.
\end{align*}
Using a stability estimate for $I^*I$ on simple manifolds, we get  
\begin{align*}
    \|Q\|^2_{H^{-1}} \lesssim \| I^*IQ \|_{L^2(\Omega')}^2 &\lesssim
   \delta \tau^{2s} + \tau^{-\alpha_1}\|Q\|_{H^{t_M}(\Omega)}.
\end{align*}
Using the fact that $r$ is bounded away from $0$ and the interpolation inequality, we obtain
\begin{align*}
    \|q_1-q_2\|^{2+\frac{2}{t_M} }_{L^2}& \lesssim \| Q \|^{\frac{2(t_M+1)}{t_M}}_{L^2} \leq \|Q\|^{2}_{H^{-1}}\|Q\|^{\frac{2}{t_M}}_{H^{t_M}}\lesssim 
    \left(\delta \tau^{2s} + \tau^{-\alpha_1}\|Q\|_{H^{t_M}}\right)\|Q\|^{\frac{2}{t_M}}_{H^{t_M}} .
\end{align*}
Observe now that $Q$ is independent of $\tau$, and therefore using the high order Sobolev estimates for $q_1,q_2$ we get $\|Q\|_{H^{t_M}}\lesssim 1$. As a result,
\begin{align*}
    \|q_1-q_2\|^{2+\frac{2}{t_M} }_{L^2}& \lesssim  \delta \tau^{2s} + \tau^{-\alpha_1} .
\end{align*}
Because $\alpha_1>0$, the above inequality can be optimized with respect to $\tau$: choosing $\tau = \delta^{-\frac{1}{2s+\alpha_1}}$, we finally get the stability estimate
\begin{align*}
    \|q_1-q_2\|_{L^2}& \lesssim  \delta^{\frac{\alpha_1}{4s+2\alpha_1}\frac{t_M}{t_M+1}} .
\end{align*}
Here $t_M$ can be taken arbitrarily large, and so the exponent $\gamma$ can approach $\frac{\alpha_1}{4s+2\alpha_1}$. However, this implies that we need to assume higher order Sobolev estimates for $q_1,q_2$.
\end{proof}

\begin{Rem}\label{rem:why-not}
    As discussed in Remark \ref{rem:why-small-s-hard}, the geometrical optics solutions constructed in Proposition \ref{prop:approx-sol-rq} for the two regimes $s\in (0,\frac 12)$ and $s\in[\frac 12,1)$ differ substantially. Having now completely solved the eikonal and transport equations in polar normal coordinates, we see that in both cases the principal phase function $\varphi_0$ oscillates very rapidly for large $\tau$ in the direction of the geodesics of the simple refraction index $r$, while it is constant in the orthogonal direction $\theta$. If there are no other phase functions, which is the case for $s\in[\frac 12,1)$, this behaviour allows us to interpret the integral $\int_\Omega (q_1-q_2)u_1\overline{u_2}dx$ in terms of the geodesic ray transform, as in equation \eqref{geodesic-x-ray-transform}. This in turn is our main tool in the final Step 5 of the proof.\\
    
    However, in the regime $s\in (0,\frac 12)$ the geometrical optics construction produces a sequence of non vanishing phase functions $\varphi_j$, associated to both positive and negative powers of $\tau$. These additional phase functions do not follow the geodesics of $r$, and also they do not necessarily oscillate rapidly for $\tau\rightarrow\infty$, which makes our method inapplicable in the case $s\in (0,\frac 12)$. Heuristically, one can reason that geometrical optics solutions with "classical" behaviour only arise when the fractional parameter $s$ is close enough to $1$ (and thus the main part of the operator is close to the Laplacian), while for $s <\frac 12$ they do not exist due to the fact that the main part of the operator has order less than $1$, and is thus very different from the Laplacian. 
\end{Rem}

\section{Appendix}

In the proof of the stability result of Theorem \ref{Th:stability-high-s} we need to know the exact expression of the operator $L_{1;0}$ in polar normal coordinates. This is computed in the following Lemma.

\begin{Lem}
    Let $L_{1;0}$ be the first order operator defined by \eqref{def-operators-L} for $\mu=1, \mathcal T'=0$. Moreover, let $(\rho,\theta)$ indicate the normal polar coordinates relative to the simple refraction index $r$, as defined in Step 3 of the proof of Theorem \ref{Th:stability-high-s}. Then it holds
    $$ L_{1;0} = \pm 2si r^{2s} \left( \p_\rho \mp \frac{r^{-2}\Delta\varphi_0 }{2}+(1-s) r^{-1}\p_\rho r\right), $$
    where the sign changes are due to the choice of phase function $\varphi_0=\pm\rho$.
\end{Lem}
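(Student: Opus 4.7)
\emph{Plan.} I will specialize \eqref{def-operators-L} to $\mu=1,\mathcal T'=0$ and then rewrite the result in polar normal coordinates. Because $\sum_{j\ge 1}t_j=0$ forces $|\delta|\le 1$, the operator $L_{1;0}$ splits into a first-order part ($|\delta|=1$) and a zeroth-order part ($\delta=0$). The identity $I_{\eta_j,0,\varphi_j(\cdot+y)}(0)=\delta_{\eta_j,0}$ eliminates all summands with $\eta_j\ne 0$ for $j\ge 1$, so $\eta_0=\beta-\delta$ in every surviving term.

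\emph{First-order part.} When $|\delta|=1$, the constraint $|\beta|=1+t_0$ together with $r_y(0)=\nabla r_y(0)=0$ (which forces $|\rho_i|\ge 2$ in each factor of $I_{\eta_0,t_0,r_y}(0)$, hence $|\eta_0|\ge 2t_0$) leaves only $t_0=0$, $\beta=\delta$, $\eta_0=0$, with $R^{\delta,\delta,\mathcal N}_{\mathcal T}(y)=1$. Summing over $|\delta|=1$ and using $\nabla_\xi|\xi|^{2s}=2s|\xi|^{2s-2}\xi$ together with the eikonal equation $|\nabla\varphi_0|=r$ gives
\[L_{1;0}^{(\mathrm{princ})} \;=\; i\,\nabla_\xi(|\xi|^{2s})\bigr|_{\nabla\varphi_0}\cdot\nabla \;=\; 2is\,r^{2s-2}\,\nabla\varphi_0\cdot\nabla.\]

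\emph{Zeroth-order part.} For $\delta=0$, the same analysis forces $t_0\le 1$; since $|\beta|=1+t_0\ne 0$, the case $t_0=0$ is excluded, so only $t_0=1$, $|\beta|=2$ contributes, giving $I_{\beta,1,r_y}(0)=iD^\beta\varphi_0(y)$. Using the explicit Hessian $\partial_i\partial_j|\xi|^{2s}=2s|\xi|^{2s-2}\delta_{ij}+2s(2s-2)|\xi|^{2s-4}\xi_i\xi_j$ and the eikonal equation, the resulting second-order sum evaluates to $2sr^{2s-2}b_s$, where $b_s=\Delta\varphi_0+(2s-2)\tfrac{\varphi_0''\nabla\varphi_0\cdot\nabla\varphi_0}{|\nabla\varphi_0|^2}$. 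Combining the two parts,
\[L_{1;0} \;=\; is\,r^{2s-2}\bigl(\,2\nabla\varphi_0\cdot\nabla - b_s\,\bigr).\]

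\emph{Polar normal coordinates.} With $\varphi_0=\pm\rho$, the identity $r^{-2}\nabla\rho\cdot\nabla\psi=\p_\rho\psi$ from Step~3 of the proof of Theorem~\ref{Th:stability-high-s} gives $\nabla\varphi_0\cdot\nabla=\pm r^2\p_\rho$. Differentiating $|\nabla\varphi_0|^2=r^2$ yields $\varphi_0''\nabla\varphi_0=r\nabla r$, whence $\tfrac{\varphi_0''\nabla\varphi_0\cdot\nabla\varphi_0}{|\nabla\varphi_0|^2}=\pm r\p_\rho r$ and $b_s=\Delta\varphi_0\pm(2s-2)r\p_\rho r$. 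Substituting produces
\[L_{1;0} \;=\; \pm 2is\,r^{2s}\,\p_\rho \;-\; is\,r^{2s-2}\,\Delta\varphi_0 \;\pm\; 2is(1-s)\,r^{2s-1}\,\p_\rho r,\]
and factoring out $\pm 2isr^{2s}$ converts the three remaining coefficients into $\p_\rho$, $\mp\tfrac{r^{-2}\Delta\varphi_0}{2}$, and $(1-s)r^{-1}\p_\rho r$, yielding the stated formula. The only genuinely new computation is the Hessian contraction above; the remaining difficulty is the careful bookkeeping of the two independent sign choices (the choice $\varphi_0=\pm\rho$ and the induced $\mp$ in the middle term) through the algebra.
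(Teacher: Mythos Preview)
Your proof is correct and follows essentially the same route as the paper's Appendix: specialize \eqref{def-operators-L} to $\mu=1$, $\mathcal T'=0$, use $I_{\eta_j,0,\varphi_j(\cdot+y)}(0)=\delta_{\eta_j,0}$ and the vanishing $r_y(0)=\nabla r_y(0)=0$ to reduce to the two surviving contributions ($|\delta|=1$, $t_0=0$ and $\delta=0$, $t_0=1$, $|\beta|=2$), then contract the Hessian of $|\xi|^{2s}$ with $\varphi_0''$ and pass to polar normal coordinates via $\nabla\varphi_0\cdot\nabla=\pm r^2\partial_\rho$ and $\varphi_0''\nabla\varphi_0=r\nabla r$. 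The paper carries out the same reductions more explicitly (writing out the full case analysis of the $R^{\beta,\delta,\mathcal N}_{\mathcal T}$ terms), but the logic is identical; your version is simply a compressed account of the same computation.
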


\begin{proof}
    The proof is a long, but elementary computation. We start from the definition of the operator, which is given by formula \eqref{def-operators-L}:
\begin{align*}
    L_{1;0} &= \sum_{ |\delta|\leq 1} D^{\delta} \sum_{t_0=0}^{N-1} \frac{i^{1+t_0}}{({1+t_0})!}\sum_{\substack{\beta\geq\delta \\ |\beta|={1+t_0}}}  D^\beta_\xi(|\xi|^{2s})|_{\xi = \nabla \varphi_0}    \sum_{\substack{ \{ \eta_j \}_{j=0}^{F_M} \subset \N^n: \\ \sum_{j=0}^{F_M} \eta_j = \beta-\delta, \\ |\eta_0|\geq t_0}}   R^{\beta,\delta, \mathcal N}_{t_0, 0,...,0}
    \\ & = 
     \sum_{t_0=0}^{N-1} \frac{i^{1+t_0}}{({1+t_0})!}\sum_{ |\beta|=1+t_0} D^\beta_\xi(|\xi|^{2s})|_{\xi = \nabla \varphi_0}    \sum_{\substack{ \{ \eta_j \}_{j=0}^{F_M} \subset \N^n: \\ \sum_{j=0}^{F_M} \eta_j = \beta, \\ |\eta_0|\geq t_0}}   R^{\beta,0, \mathcal N}_{(t_0,0)}
    \\ & \quad +
    \sum_{ |\delta|= 1} D^{\delta} \sum_{t_0=0}^{N-1} \frac{i^{1+t_0}}{({1+t_0})!}\sum_{\substack{\beta\geq\delta \\ |\beta|={1+t_0}}}  D^\beta_\xi(|\xi|^{2s})|_{\xi = \nabla \varphi_0}    \sum_{\substack{ \{ \eta_j \}_{j=0}^{F_M} \subset \N^n: \\ \sum_{j=0}^{F_M} \eta_j = \beta-\delta, \\ |\eta_0|\geq t_0}}   R^{\beta,\delta, \mathcal N}_{(t_0, 0)}
      \\ & = 
     \sum_{t_0=0}^{N-1} \frac{i^{1+t_0}}{({1+t_0})!}\sum_{ |\beta|=1+t_0} D^\beta_\xi(|\xi|^{2s})|_{\xi = \nabla \varphi_0}   \left( \sum_{ \substack{|\eta_0|= t_0 \\ \eta_0\leq\beta}} \sum_{\substack{A\in \N^{n\times (F_M-1)} : \\|A|=1}} R^{\beta,0, (\eta_0,A)}_{(t_0,0)}
    +  \sum_{ \substack{|\eta_0|= 1+t_0\\ \eta_0\leq\beta}}   R^{\beta,0, (\eta_0,0)}_{(t_0,0)}  \right)
    \\ & \quad +
    \sum_{ |\delta|= 1} D^{\delta} \sum_{t_0=0}^{N-1} \frac{i^{1+t_0}}{({1+t_0})!}\sum_{\substack{\beta\geq\delta \\ |\beta|={1+t_0}}}  D^\beta_\xi(|\xi|^{2s})|_{\xi = \nabla \varphi_0}    \sum_{\substack{|\eta_0|= t_0 \\ \eta_0\leq\beta}}   R^{\beta,\delta, (\eta_0, 0)}_{(t_0,0)}
      \\ & = 
     \sum_{t_0=0}^{N-1} \frac{i^{1+t_0}}{({1+t_0})!}\sum_{ |\sigma|=1+t_0} D^\sigma_\xi(|\xi|^{2s})|_{\xi = \nabla \varphi_0}   \left( \sum_{ \substack{|\eta_0|= t_0, \\ \eta_0\leq\sigma}} \sum_{\substack{A\in \N^{n\times (F_M-1)} : \\|A|=1}} R^{\sigma,0, (\eta_0,A)}_{(t_0,0)}
    +  \sum_{ \substack{|\eta_0|= 1+t_0\\ \eta_0\leq\sigma}}   R^{\sigma,0, (\eta_0,0)}_{(t_0,0)}  \right)
    \\ & \quad +
    \sum_{ |\delta|= 1} D^{\delta} \sum_{t_0=0}^{N-1} \frac{i^{1+t_0}}{({1+t_0})!}\sum_{|\sigma|=t_0 }  D^{\sigma +\delta}_\xi(|\xi|^{2s})|_{\xi = \nabla \varphi_0}    \sum_{\substack{|\eta_0|= t_0\\ \eta_0\leq\sigma+\delta}}   R^{\sigma +\delta,\delta, (\eta_0, 0)}_{(t_0,0)}.
\end{align*} 
We now substitute the definition of the $R$ terms. These are
$$ R^{\sigma + \delta,\delta, \mathcal N}_{(t_0,0)}(y) = \binom{\sigma + \delta}{\delta}\binom{\sigma}{\sigma-\eta_0} I_{\eta_0, t_0, r_y}(0) \prod_{j=1}^{F_M} \binom{\sigma -\sum_{i=0}^{j-1}\eta_i}{\sigma -\sum_{i=0}^j\eta_i }  I_{\eta_{j},0,\varphi_{j}(\cdot+y)}(0),$$
with $r_{y}(x) = \varphi_0(y+x)-\varphi_0(y) - \nabla \varphi_0(y) \cdot x$ and
\begin{align*}
     I_{\alpha,m,v}(x) = i^m \sum_{ \substack{\rho \in (\N_{>0}^n)^m :\\  \sum_{j=1}^m\rho_j=\alpha}}\prod_{j=1}^m D^{\rho_j}_xv(x).
\end{align*}
In particular $r_{y}(0) = 0$, $\nabla r_{y}(0) = 0$, $\nabla^2 r_{y}(0) = \nabla^2 \varphi_0(y)$, and so 
\begin{align*}
      & I_{0,0,v}(0) = 1 , \qquad  I_{e_k,0,v}(0) = 0, 
      \\ & I_{\eta_0, |\eta_0|, r_y}(0) = i^{|\eta_0|} \sum_{ \substack{\rho \in (\N_{>0}^n)^{|\eta_0|} :\\  \sum_{j=1}^{|\eta_0|}\rho_j=\eta_0}}\prod_{j=1}^{|\eta_0|} D^{\rho_j}_xr_y(0) = i^{|\eta_0|} \sum_{ \substack{\rho \in (\N_{>0}^n)^{|\eta_0|} :\\  |\rho_j|=1}}\prod_{j=1}^{|\eta_0|} D^{\rho_j}_xr_y(0) = 0, \qquad \mbox{if } |\eta_0|\neq 0,
      \\ & I_{\eta_0, |\eta_0|-1, r_y}(0) = i^{|\eta_0|-1} \sum_{ \substack{\rho \in (\N_{>0}^n)^{|\eta_0|-1} :\\  \sum_{j=1}^{|\eta_0|-1}\rho_j=\eta_0}}\prod_{j=1}^{|\eta_0|-1} D^{\rho_j}_xr_y(0) =0, \qquad \mbox{if } |\eta_0|\neq 1,2,
       \\ & I_{\eta_0, 1, r_y}(0) = i D^{\eta_0}_xr_y(0), \qquad \mbox{if } |\eta_0|=2.
\end{align*} 
Therefore

$$ R^{\sigma,0, (\eta_0,A)}_{(t_0,0)}(y) = 0, \qquad \mbox{if } |A|=1,$$
$$ R^{\sigma,0, (\eta_0,0)}_{(|\eta_0|-1,0)}(y) = \binom{\sigma}{\sigma-\eta_0} I_{\eta_0, |\eta_0|-1, r_y}(0) = \begin{cases}
    0, & |\eta_0|\neq 2
    \\ i \binom{\sigma}{\sigma-\eta_0} D^{\eta_0}_xr_y(0), & |\eta_0|=2 \end{cases} ,$$
$$  R^{\sigma +\delta,\delta, (\eta_0, 0)}_{(|\eta_0|,0)} = \binom{\sigma + \delta}{\delta}\binom{\sigma}{\sigma-\eta_0} I_{\eta_0, |\eta_0|, r_y}(0) = \begin{cases}
    0, & |\eta_0|\neq 0
    \\ \binom{\sigma + \delta}{\delta}, & |\eta_0|=0 \end{cases} ,$$     
which gives
\begin{align*}
L_{1;0} &= i\sum_{ |\delta|= 1}   D^{\delta}_\xi(|\xi|^{2s})|_{\xi = \nabla \varphi_0} D^{\delta}  - 
  \frac{i }{2}\sum_{|\sigma|=2 }  D^{\sigma}_\xi(|\xi|^{2s})|_{\xi = \nabla \varphi_0}    D^\sigma \varphi_0
  \\ & =
  i  \nabla_\xi(|\xi|^{2s}) |_{\xi=\nabla\varphi_0}\cdot\nabla - \frac{i }{2} \nabla^2\varphi_0:\nabla^2_\xi (|\xi|^{2s}) |_{\xi=\nabla\varphi_0}
  \\ & =  2si |\nabla\varphi_0|^{2s-2} \nabla\varphi_0  \cdot\nabla - is |\nabla\varphi_0|^{2s-2}\nabla^2\varphi_0: ((2s-2)|\nabla\varphi_0|^{-2} \nabla\varphi_0\otimes\nabla\varphi_0 + I) 
    \\ & =  si |\nabla\varphi_0|^{2s-2} \left(2\nabla\varphi_0  \cdot\nabla - \Delta\varphi_0 \right) - si(2s-2) |\nabla\varphi_0|^{2s-4}(\nabla^2\varphi_0\cdot  \nabla\varphi_0)\cdot\nabla\varphi_0
    \\ & =  
    si |\nabla\varphi_0|^{2s-2} \left( \left(2\nabla\varphi_0  \cdot\nabla - \Delta\varphi_0 \right) + (2-2s) |\nabla\varphi_0|^{-1} \nabla\varphi_0\cdot  \nabla(|\nabla \varphi_0| )\right)
    \\ & =  
    \pm 2si r^{2s} \left( \p_\rho \mp \frac{r^{-2}\Delta\varphi_0 }{2}+(1-s) r^{-1}\p_\rho r\right).
    \end{align*}
\end{proof}

\bibliographystyle{plainnat}
\bibliography{ARXIV}

\end{document}